\newtheorem*{cor}{Corollary}
\newtheorem{lemma}{Lemma}[section]
\newtheorem{thm}{Theorem}[section]
\newtheorem{q}{Problem}
\theoremstyle{definition}
\newtheorem*{defn}{Definition}
\newcommand{\lcm}[0]{\textup{lcm}}
\newcommand{\slope}[1]{\textup{slope}\left( #1 \right)}
\newcommand{\modslope}[2]{\textup{slope}_{#1}\left( #2 \right)}
\begin{document}

\title{On Additive Complexity of Infinite Words}
\author{Graham Banero \\ Department of Mathematics \\ Simon Fraser University \\ 8888 University Dr. \\ Burnaby, BC V5A 1S6, Canada \\ gbanero@sfu.ca}
\date{}
\maketitle

\begin{abstract}
We consider questions related to the structure of infinite words (over an integer alphabet) with bounded additive complexity, i.e., words with the property that the number of distinct sums exhibited by factors of the same length is bounded by a constant that depends only on the word. We describe how bounded additive complexity impacts the slope of the word and
how a non-erasing morphism may affect the boundedness of a given word's additive complexity. We prove the existence of recurrent words with constant additive complexity equal to any given odd positive integer. In the last section, we discuss a generalization of additive complexity. Our results suggest that words with bounded additive complexity may be viewed as a generalization of balanced words.  
\end{abstract}

\section{Introduction} \label{sec: 1}

In this note we consider infinite words $\omega = x_1x_2x_3 \cdots$ with $x_i \in S$, where $S$ is a set of integers. If $B = y_1y_2 \cdots y_n$ (here $B$ has \emph{length} $n$) is a \emph{factor} of $\omega$, i.e., $\omega = x_1x_2 \cdots x_m y_1y_2 \cdots y_n z_1z_2 \cdots$, then the \emph{sum} of $B$ is defined by $\sum B = y_1 + y_2 + \cdots + y_n$. We are primarily concerned with words $\omega$ for which there is some $M \in \mathbb{N}$ such that for all $n \geq 1$,
\[ \left| \left\{ \sum B: B \textup{ is a factor of $\omega$ with length } n\right\} \right| \leq M. \]
If such an $M$ exists, $\omega$ is said to have \emph{bounded additive complexity}. We will discuss questions such as the following: If $\omega$ is an infinite word with bounded additive complexity, what else can we say about $\omega$? What conditions are equivalent to bounded additive complexity? What closure properties does the set of words with bounded additive complexity possess?

\subsection{Terminology and Motivation} \label{sub: 1.1}

For the remainder of this note, $S,T$ will always denote sets of integers called \emph{alphabets}, and unless explicitly stated otherwise we will assume that $S$ and $T$ are finite. An element $B = x_1x_2\cdots x_n$ of the free monoid $S^*$ generated by $S$ is called a \emph{finite word} over $S$ with \emph{length} $|B| = n$. We define the \emph{sum} of $B$ as 
\[ \sum B = x_1 + x_2 + \cdots + x_n, \]
and for $s \in S$, we let
\[ |B|_s = \left| \left\{ x_i : 1 \leq i \leq n, \: x_i = s \right\} \right|. \]
If there are words $B',B''$ such that $A = B'BB'' \in S^*$, then $B$ is said to be a \emph{factor} of $A$.
The identity element of $S^*$, called the \emph{empty word}, has length $0$ and sum $0$, and is a factor of every word.

An \emph{infinite word} over $S$ is a sequence $\omega: \mathbb{N} \mapsto S$ and is usually written $\omega = x_1x_2x_3 \cdots$, where $\omega(i) = x_i \in S$ for all $i \in \mathbb{N}$. The set of infinite words over $S$ is denoted by $S^\mathbb{N}$, and we let $S^\infty = S^* \cup S^\mathbb{N}$. For $n \geq m \in \mathbb{N}$, we let $\omega[m,n] = x_mx_{m+1}\cdots x_n$, and the word $B \in S^*$ is said to be a \emph{factor} of $\omega$ if there exist $m,n$ such that $B = \omega[m,n]$. In this case, $B$ is said to \emph{occur} in the \emph{interval} $[m,n] = \{m+i: i = 0,1,\dots,n - m\}$. The set of all factors of the infinite word $\omega$ is $\mathcal{F}(\omega)$, and the set of all factors of $\omega$ with length $n$ is $\mathcal{F}^n(\omega)$. 

An infinite word $\omega$ is said to be \emph{recurrent} if all of its factors occur infinitely often. If there exists, for all $n \in \mathbb{N}$, a positive integer $K(n)$ such that every $B' \in \mathcal{F}^n(\omega)$ is a factor of every $B \in \mathcal{F}^{K(n)}(\omega)$, then $\omega$ is \emph{uniformly recurrent}. 

\begin{defn}
For $k \in \mathbb{N}$, an \emph{additive $k$-power} is a word $\omega_1\omega_2\cdots \omega_k \in S^*$ such that $|\omega_1| = |\omega_2| = \cdots = |\omega_k|$ and $\sum \omega_1 = \sum \omega_2 = \cdots = \sum \omega_k$. An infinite word $\omega$ \emph{contains} an additive $k$-power if some factor of $\omega$ is an additive $k$-power.
\end{defn}

An infamous problem in combinatorics on words is that of whether or not there exists an infinite word over some finite subset of $\mathbb{Z}$ containing no additive square (i.e., additive $2$-power). This problem, which is to this day unsolved, appears in print as early as 1992 in a paper of Pirillo and Varricchio \cite{pirillo94} in the language of semigroups; independently, Halbeisen and Hungerb\"{u}hler \cite{halbeisen00} asked the question in the more combinatorial form described here. A number of articles have been inspired by this problem \cite{robertson11, cassaigne, freedman, gryt}. In particular, Cassaigne, Currie, Schaeffer, and Shallit \cite{cassaigne} show that there is an infinite word over $\{0,1,3,4\}$ with no additive cube (i.e., additive $3$-power).

Motivated by this problem of avoidability of additive squares, Ardal, Brown, Jungi\'{c}, and Sahasrabudhe \cite{ardal12} introduced the notion of additive complexity as follows.

\begin{defn}
Let $\omega$ be an infinite word over the alphabet $S$. The \emph{additive complexity} of $\omega$ is the map $\rho_{\omega}^{\Sigma}: \mathbb{N} \mapsto \mathbb{N}$  given by 
\[ \rho_{\omega}^{\Sigma}(n) = \left| \left\{ \sum B: B \in \mathcal{F}^n(\omega)\right\} \right|. \]
Thus, $\rho_{\omega}^{\Sigma}(n)$ is the number of distinct sums exhibited by factors of $\omega$ with length $n$.
We say that $\omega$ has \emph{bounded} additive complexity if there exists $M \in \mathbb{N}$ such that $\rho_{\omega}^{\Sigma}(n) \leq M$ for all $n \in \mathbb{N}$.
\end{defn}

The definition of the additive complexity of an infinite word is in the same spirit as the {\it abelian complexity} \cite{richomme09} of a word $\omega$, which is the function defined on $\mathbb{N}$ that, for $n\in \mathbb{N}$, counts the maximum number of factors of length $n$ such that no two are permutations of one another.  While it is clear that bounded abelian complexity implies bounded additive complexity, Ardal et al. \cite{ardal12} give a simple example to show that the converse is false. 

Ardal et al. \cite{ardal12} also proved the following theorem, connecting boundedness of additive complexity with the existence of additive $k$-powers.

\begin{thm} \label{thm: 1} 
If $\omega$ is an infinite word over $S$ with bounded additive complexity, then $\omega$ contains an additive $k$-power for all $k \in \mathbb{N}$.
\end{thm}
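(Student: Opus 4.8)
The plan is to pass to the sequence of partial sums $P_n = x_1 + x_2 + \cdots + x_n$ (with $P_0 = 0$) and reformulate the conclusion geometrically. A length-$k\ell$ factor $\omega[a+1, a+k\ell]$ is an additive $k$-power with block length $\ell$ precisely when its $k$ consecutive blocks have equal sums, i.e. when $P_{a+j\ell} - P_{a+(j-1)\ell}$ is independent of $j$; equivalently, the points $(a+j\ell, P_{a+j\ell})$ for $j = 0,1,\ldots,k$ are collinear. Since their first coordinates already form an arithmetic progression, collinearity is the same as asking that $P_a, P_{a+\ell}, \ldots, P_{a+k\ell}$ be an arithmetic progression. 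So it suffices to produce, for each $k$, indices $a \ge 0$ and $\ell \ge 1$ at which the partial sums sampled with gap $\ell$ form an arithmetic progression of length $k+1$ (the case $k=1$ being trivial).

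First I would show that bounded additive complexity forces the graph of $P$ to lie in a strip of bounded width about a line. Fix $\ell$ and set $g_\ell(m) = P_{m+\ell} - P_m$, the sum of the length-$\ell$ factor starting at $m+1$. Consecutive values satisfy $g_\ell(m+1) - g_\ell(m) = x_{m+1+\ell} - x_{m+1}$, so $(g_\ell(m))_m$ is an integer sequence that changes by at most $D := \max S - \min S$ at each step. It attains every element of $\{\sum B : B \in \mathcal{F}^\ell(\omega)\}$, a set of size at most $M$ by hypothesis; a $D$-Lipschitz integer sequence realizing at most $M$ values cannot have two consecutive sorted values more than $D$ apart, so its range is at most $(M-1)D =: C$, independent of $\ell$. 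Writing $a_\ell, b_\ell$ for the least and greatest length-$\ell$ sums, superadditivity of $(a_\ell)$ and subadditivity of $(b_\ell)$ under concatenation of factors, together with $b_\ell - a_\ell \le C$, give via Fekete's lemma a common slope $\alpha = \lim_\ell a_\ell/\ell = \lim_\ell b_\ell/\ell$ with $a_\ell \le \alpha\ell \le b_\ell$. As $P_\ell \in [a_\ell, b_\ell]$ too, this yields $|P_n - \alpha n| \le C$ for all $n$.

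Now I would invoke van der Waerden's theorem through a two-coordinate colouring designed to work even when $\alpha$ is irrational (which genuinely occurs, e.g. for the nearest-integer word of an irrational slope). Fix an integer $Q \ge 3$ and colour each $n$ by $\chi(n) = \big( P_n - \lfloor \alpha n\rfloor,\ \lfloor Q\{\alpha n\}\rfloor \big)$, where $\{\cdot\}$ denotes fractional part. The first coordinate is an integer of absolute value at most $C+1$ and the second lies in $\{0,\ldots,Q-1\}$, so $\chi$ uses only finitely many colours; van der Waerden's theorem then furnishes a monochromatic arithmetic progression $a, a+\ell, \ldots, a+k\ell$ of length $k+1$ with $\ell \ge 1$.

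It remains to check that monochromaticity forces $P_{a+j\ell}$ to be an arithmetic progression in $j$. Equality of the first coordinate gives $P_{a+j\ell} = c + \lfloor \alpha(a+j\ell)\rfloor$ for a constant $c$, so it is enough that $\lfloor\alpha(a+j\ell)\rfloor$ be an arithmetic progression in $j$. Equality of the second coordinate confines all the fractional parts $\{\alpha(a+j\ell)\}$ to a common interval of length $1/Q$; hence each consecutive difference $\lfloor\alpha(a+(j+1)\ell)\rfloor - \lfloor\alpha(a+j\ell)\rfloor = \alpha\ell - \big(\{\alpha(a+(j+1)\ell)\} - \{\alpha(a+j\ell)\}\big)$ is an integer lying in the fixed interval $(\alpha\ell - 1/Q,\ \alpha\ell + 1/Q)$ of length $2/Q < 1$, which contains at most one integer. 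The consecutive differences are therefore all equal, so $\lfloor\alpha(a+j\ell)\rfloor$ and hence $P_{a+j\ell}$ is an arithmetic progression, and $\omega[a+1,a+k\ell]$ is the desired additive $k$-power. I expect this last manoeuvre to be the main obstacle: ordinary van der Waerden only controls the residual $P_n - \lfloor\alpha n\rfloor$, and for irrational $\alpha$ the term $\lfloor\alpha(a+j\ell)\rfloor$ would otherwise be free to wobble; discretising $\{\alpha n\}$ and invoking the width-$2/Q<1$ pigeonhole is exactly what pins the progression down.
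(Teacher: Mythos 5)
Your argument is correct, and it genuinely proves the statement. One thing to be aware of: the paper itself does not prove Theorem~\ref{thm: 1} at all --- it is quoted from Ardal, Brown, Jungi\'{c}, and Sahasrabudhe \cite{ardal12} --- so there is no in-paper proof to compare against line by line; the closest internal analogue is the proof of Theorem~\ref{thm: 5}, which uses the same van der Waerden strategy but only under the extra hypothesis of rational slope. Relative to the paper's machinery, your ``bounded strip'' estimate $|P_n - \alpha n| \le C$ is exactly the content of Theorem~\ref{thm: 2} together with Theorem~\ref{thm: 3}, and you rederive it from scratch: the Lipschitz-plus-pigeonhole bound on the range of length-$\ell$ sums and the Fekete argument form a clean, self-contained alternative to the paper's inductive Lemma~\ref{lemma: 1}. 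The genuinely delicate point is the one you identify yourself: the paper's coloring $\chi_\omega(m) = \sum \omega[1,mq] - mp$ only exists when the slope is rational, whereas Theorem~\ref{thm: 1} must handle irrational slope (e.g., Sturmian words). Your two-coordinate coloring by $\left(P_n - \lfloor \alpha n \rfloor,\ \lfloor Q\{\alpha n\}\rfloor\right)$ with $Q \ge 3$ pins $P_n - \alpha n$ down to within $1/Q$, so along a monochromatic $(k+1)$-term progression the $k$ block sums are integers confined to an open interval of length $2/Q < 1$ and are therefore equal; this is sound. (Equivalently, one could color $n$ directly by which length-$(1/Q)$ subinterval of $[-C-1,\,C+1]$ contains $P_n - \alpha n$; your floor/fractional-part bookkeeping is the same discretization written out explicitly.) I see no gaps.
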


As an example of infinite words with bounded additive complexity we mention the family of \emph{balanced words}, i.e., words $\omega \in S^\mathbb{N}$ such that for all $n \in \mathbb{N}$, any two $B,B' \in \mathcal{F}^n(\omega)$ and any $s\in S$, one has $||B|_s-|B'|_s|\leq 1$. See the remark after Theorem~\ref{thm: 2}.

\subsection{Overview} \label{sub: 2}

In Section~\ref{sec: 2}, we obtain a new characterization of words with bounded additive complexity in terms of the slope of a word, and examine necessary and sufficient conditions for the slope of a word with bounded additive complexity to be a rational number. We develop results pertaining to the factorization of words with bounded additive complexity and rational slope, and prove a closure property for these words involving the contraction of certain intervals.

Section~\ref{sec: 3} is concerned with the relationship between morphisms (i.e., concatenation preserving mappings) and boundedness of additive complexity. More specifically, we explicitly describe the morphisms which map all words to words with bounded additive complexity. We also describe the words whose image under any morphism has bounded additive complexity. Using the theory developed in this section, we conjecture a way to explicitly construct all words with bounded additive complexity and rational slope. 

In Section~\ref{sec: 4} we construct, for every $k \in \mathbb{N}$, a recurrent word $\omega \in \{0,1,\dots,2k\}^\mathbb{N}$ having constant additive complexity (i.e., $\rho_{\omega}^{\Sigma}(n) = 2k + 1$ for all $n \in \mathbb{N}$).

Finally, Section~\ref{sec: 5} discusses a generalization to complexities defined by an arbitrary morphism $S^* \mapsto \mathbb{Z}^t$, $t \geq 1$.

\section{Slopes of Words and Bounded Additive Complexity} \label{sec: 2}

For a finite word $\omega$, the \emph{slope} of $\omega$ is defined as
\[ \slope{\omega} = \frac{1}{|\omega|} \sum \omega. \]
The \emph{slope} of an infinite word $\omega$ is
\[ \slope{\omega} = \lim_{n \to \infty} \frac{1}{n} \sum \omega[1,n] = \lim_{n \to \infty} \slope{\omega[1,n]}, \]
provided that this limit exists. For a word $\omega$, we may visualize each image $\omega(n)$ as the point $\left( n,\sum \omega[1,n]\right) \in \mathbb{Z}^2$ in the plane. Saying that $\slope{\omega}$ exists is then, intuitively, the same as saying that this set of points has a line of best fit passing through the origin.  
It is an easy exercise to construct a word $\omega$ for which $\slope{\omega}$ does not exist. 

The fact \cite[p.\ 50]{lothaire} that any two nonempty factors $B$ and $B'$ of a balanced binary word $\omega \in \{0,1\}^\mathbb{N}$ satisfy
$$ |\slope{B} - \slope{B'}| < \frac{1}{|B|} + \frac{1}{|B'|}$$  implies (since then the sequence $\{\slope{\omega[1,n]}\}_{n \in \mathbb{N}}$ is Cauchy) that
$\slope{\omega}$ exists for all balanced binary words. One thereby derives that for all nonempty factors $B$ of $\omega$,
\[ | \slope{B} - \slope{\omega} | \leq \frac{1}{|B|}. \]
In fact, a balanced binary word $\omega$ is eventually periodic (i.e., there are $A,B \in \{0,1\}^*$ such that $\omega = ABBB \cdots$) if and only if $\slope{\omega} \in \mathbb{Q}$ \cite[p.\ 52]{lothaire}. 

In the rest of this section we will prove generalizations of the aforementioned results, using words with bounded additive complexity in place of balanced binary  words. 

\subsection{Slope of a Word With Bounded Additive Complexity} \label{sub: 2.1}

We start by noticing that there are infinite words whose slope exists, yet do not have bounded additive complexity. For, consider the word $\omega \in \{0,1\}^\mathbb{N}$, defined by the rule for all $i \in \mathbb{N}$, that $\omega(i) = 1$ if and only $i$ is a power of $2$. Since $\omega$ contains arbitrarily long factors consisting of only $0$s, and yet $1$ occurs infinitely many times, $\omega$ does not have bounded additive complexity. But clearly, $\slope{\omega} = 0$. 

Our main tool in this section will be  the following theorem of Ardal et al. \cite{ardal12}.

\begin{thm} \label{thm: 2}
The infinite word $\omega \in S^\mathbb{N}$ has bounded additive complexity if and only if there exists $M \in \mathbb{N}$ such that for all $n \in \mathbb{N}$ and all $B,B' \in \mathcal{F}^n(\omega)$,  
\[ \left|\sum B - \sum B' \right| < M . \]
\end{thm}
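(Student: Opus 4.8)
The statement is a biconditional, so I would prove the two implications separately after fixing notation. For each $n$, write $\Sigma_n = \left\{ \sum B : B \in \mathcal{F}^n(\omega) \right\}$, so that $\rho_\omega^\Sigma(n) = |\Sigma_n|$, while the right-hand condition asserts precisely that the diameter $\max \Sigma_n - \min \Sigma_n$ is bounded by a constant independent of $n$. Since $S$ is a finite set of integers, every element of $\Sigma_n$ is an integer; I use this in both directions.

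The implication from the diameter condition to bounded additive complexity is the routine one. If $|\sum B - \sum B'| < M$ for all $B,B' \in \mathcal{F}^n(\omega)$, then $\Sigma_n$ is a set of integers contained in a half-open interval of length $M$, so $|\Sigma_n| \leq M$; hence $\rho_\omega^\Sigma(n) \leq M$ for every $n$, and the same $M$ witnesses bounded additive complexity.

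For the converse---the main content of the theorem---assume $\rho_\omega^\Sigma(n) \leq M$ for all $n$, and set $D = \max S - \min S$. Fix $n$ and consider the window sums $s_i = \sum \omega[i, i+n-1]$ for $i \geq 1$. Two observations drive the argument: first, consecutive windows satisfy $s_{i+1} - s_i = \omega(i+n) - \omega(i)$, so $|s_{i+1} - s_i| \leq D$; second, since every factor of length $n$ occurs in $\omega$, the set of values taken by $(s_i)_{i \geq 1}$ is exactly $\Sigma_n$. The plan is to show these two facts force $\Sigma_n$ to have small diameter. Order its elements as $v_1 < v_2 < \cdots < v_k$ with $k = |\Sigma_n| \leq M$. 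I claim no gap $v_{j+1} - v_j$ can exceed $D$: if one did, then partitioning the indices into $\{i : s_i \leq v_j\}$ and $\{i : s_i \geq v_{j+1}\}$ would split $\mathbb{N}$ into two nonempty blocks (both $v_j$ and $v_{j+1}$ are attained) with no admissible transition between them, since crossing the gap in a single step would require $|s_{i+1} - s_i| > D$. This is impossible, because $\mathbb{N}$, viewed as a path on consecutive integers, must contain adjacent indices lying in different blocks. Hence every gap is at most $D$, so $\max \Sigma_n - \min \Sigma_n = v_k - v_1 \leq (k-1)D \leq (M-1)D$, and the choice $M' = (M-1)D + 1$ yields the desired strict bound uniformly in $n$.

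The crux---and the only step needing genuine care---is this gap argument: converting ``boundedly many distinct sums'' into ``boundedly spread sums'' by exploiting that the window sum changes by a controlled amount as the window slides one position. Everything else is bookkeeping, though I would be careful to invoke the finiteness of $S$ explicitly, since it underlies both the integrality used in the easy direction and the finite constant $D$ used in the hard one.
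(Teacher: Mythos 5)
Your proof is correct. Note that the paper does not prove this statement itself --- it quotes Theorem~\ref{thm: 2} from Ardal, Brown, Jungi\'{c}, and Sahasrabudhe \cite{ardal12} as an external tool --- but your argument (the easy direction via integrality of the sums, and the converse via the sliding-window observation that consecutive length-$n$ window sums differ by at most $\max S - \min S$, so the attained sums can have no gap exceeding that quantity and hence bounded cardinality forces bounded diameter) is the standard proof of that cited result, with all edge cases handled soundly.
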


Note that this condition holds for balanced words over $S$ if $M>\sum _{s\in S}|s|$. Further, we have the following lemma from Theorem~\ref{thm: 2}, which generalizes one of the aforementioned properties of balanced binary words.

\begin{lemma} \label{lemma: 1}
Let $\omega \in S^\mathbb{N}$ have bounded additive complexity and $M$ be as in Theorem~\ref{thm: 2}. Then, for any two nonempty factors $B$ and $B'$ of $\omega$,
\[ |\slope{B} - \slope{B'}| < \frac{M}{|B|} + \frac{M}{|B'|} . \]
\end{lemma}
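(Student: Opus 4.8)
The plan is to compare the two slopes by passing to a single long factor that can be partitioned in two different ways. Write $p = |B|$ and $q = |B'|$. Since $\omega$ is infinite, the set $\mathcal{F}^{pq}(\omega)$ is nonempty; fix any factor $C \in \mathcal{F}^{pq}(\omega)$. The key observation is that $C$ admits two natural partitions into consecutive factors: one into $q$ consecutive blocks each of length $p$, and one into $p$ consecutive blocks each of length $q$. Each block arising in either partition is itself a factor of $\omega$ of the indicated length, which is what will let me invoke Theorem~\ref{thm: 2}.

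First I would control $\sum C$ by means of the first partition. Each of its $q$ blocks has length $p$, so by Theorem~\ref{thm: 2} the sum of each block differs from $\sum B$ by strictly less than $M$. Writing $\sum C$ as the sum of the $q$ block-sums and applying the triangle inequality gives $|\sum C - q \sum B| < qM$. Applying the identical argument to the second partition, whose $p$ blocks each have length $q$, yields $|\sum C - p \sum B'| < pM$.

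Combining these two estimates through the triangle inequality bounds the cross term, $|q \sum B - p \sum B'| < qM + pM$. Since $\slope{B} - \slope{B'} = \frac{\sum B}{p} - \frac{\sum B'}{q} = \frac{q \sum B - p \sum B'}{pq}$, dividing the previous bound by $pq$ produces exactly $|\slope{B} - \slope{B'}| < \frac{M}{p} + \frac{M}{q} = \frac{M}{|B|} + \frac{M}{|B'|}$, as required. I do not expect a genuine obstacle here; the only points needing care are verifying that a common factor of length $pq$ exists (immediate, as $\omega$ is infinite) and confirming that the strict inequality supplied by Theorem~\ref{thm: 2} survives summation, which it does, since a sum of finitely many strict inequalities each of the form $a_i < M$ satisfies $\sum_i a_i < (\#\text{terms})\cdot M$. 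One could equally well take $C$ of length $\lcm(|B|,|B'|)$ and reach the same final bound.
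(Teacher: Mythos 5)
Your proof is correct, but it follows a genuinely different route from the paper's. The paper argues by induction on $|B| + |B'|$: in the inductive step it writes the longer factor as $B = CD$ with $|C| = |B'|$, expresses $\slope{B}$ as the convex combination $\frac{|D|}{|B|}\slope{D} + \frac{|C|}{|B|}\slope{C}$, and applies the induction hypothesis to the pair $(D, B')$ together with Theorem~\ref{thm: 2} applied to the pair $(C, B')$. You instead take a single auxiliary factor $C$ of length $|B|\,|B'|$ and compare $\sum C$ against $q\sum B$ and $p\sum B'$ via its two block partitions; this is a direct, non-inductive double-counting argument, and it is essentially the classical proof of the corresponding inequality for balanced binary words that the paper quotes from Lothaire (with $M$ replacing $1$). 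All the details check out: each block in either partition is a factor of $\omega$ of the right length, so Theorem~\ref{thm: 2} applies to it; a sum of $q \geq 1$ strict inequalities each bounded by $M$ is strictly less than $qM$; and the algebraic identity $\slope{B} - \slope{B'} = \frac{q\sum B - p\sum B'}{pq}$ converts the bound $|q\sum B - p\sum B'| < (p+q)M$ into exactly the claimed estimate. Your version avoids induction entirely and makes the mechanism transparent; the paper's version avoids introducing the auxiliary long factor and is marginally shorter to write down. Either is acceptable.
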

 
\begin{proof} We use induction on $|B| + |B'|$. If $|B| = |B'|$, the claim of the lemma is a restatement of Theorem~\ref{thm: 2}, whence the base case $|B| + |B'| = 2$ holds. For the inductive step, assume without loss that $|B| > |B'|$, and put $B = CD$ with $|C| = |B'|$. Since $|D|+|B'|<|B|+|B'|$, the induction hypothesis gives 
\[ |\slope{D} - \slope{B'}| < \frac{M}{|D|} + \frac{M}{|B'|}. \]
Now,
\begin{align*} |\slope{B} - \slope{B'}| &= \left| \frac{|D|}{|B|}\slope{D} + \frac{|C|}{|B|} \slope{C} - \slope{B'} \right| \\
&\leq \frac{|D|}{|B|}| \slope{D} - \slope{B'}| + \frac{|C|}{|B|}| \slope{C} - \slope{B'}| \\
&< \frac{|D|}{|B|} \left( \frac{M}{|D|} + \frac{M}{|B'|} \right) + \frac{|C|M}{|B||B'|}
= \frac{M}{|B|} + \frac{M}{|B'|},  
\end{align*}
as desired.
\end{proof}

In the following theorem we characterize words with bounded additive complexity in terms of how quickly they approach their slope. Again, we note the analogy with balanced binary words.

\begin{thm} \label{thm: 3}
Let $\omega \in S^\mathbb{N}$. Then $\rho_{\omega}^{\Sigma}(n)$ is bounded if and only if $\slope{\omega}$ exists and there is $M \in \mathbb{N}$ such that for any nonempty factor $B$ of $\omega$, we have
\[ |\slope{B} - \slope{\omega}| \leq \frac{M}{|B|}. \]
\end{thm}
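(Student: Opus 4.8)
The plan is to prove both implications by reducing everything to Theorem~\ref{thm: 2}, which replaces the counting condition $\rho_{\omega}^{\Sigma}(n) \le M$ with the statement that the sums of equal-length factors differ by a bounded amount. Since a factor $B$ of length $n$ satisfies $\slope{B} = \frac{1}{n}\sum B$, differences of slopes of equal-length factors and differences of their sums are directly comparable, and this is the bridge between the two characterizations. The other tool is Lemma~\ref{lemma: 1}, which already supplies the slope-difference estimate in the direction where we assume bounded additive complexity.

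For the easier backward direction, I would assume $\slope{\omega}$ exists and that $|\slope{B} - \slope{\omega}| \le \frac{M}{|B|}$ holds for every nonempty factor $B$. Given two factors $B, B' \in \mathcal{F}^n(\omega)$ of common length $n$, I would write $|\sum B - \sum B'| = n\,|\slope{B} - \slope{B'}|$ and route the estimate through the common value $\slope{\omega}$ by the triangle inequality, obtaining $|\slope{B} - \slope{B'}| \le \frac{M}{n} + \frac{M}{n}$. This yields $|\sum B - \sum B'| \le 2M$, so the constant $2M+1$ verifies the hypothesis of Theorem~\ref{thm: 2}, and hence $\rho_{\omega}^{\Sigma}$ is bounded.

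For the forward direction I would start from bounded additive complexity, fix the constant $M$ furnished by Theorem~\ref{thm: 2}, and invoke Lemma~\ref{lemma: 1}. Applying the lemma to the prefixes $B = \omega[1,m]$ and $B' = \omega[1,n]$ gives $|\slope{\omega[1,m]} - \slope{\omega[1,n]}| < \frac{M}{m} + \frac{M}{n}$, so the sequence $\{\slope{\omega[1,n]}\}_{n}$ is Cauchy; this establishes that $\slope{\omega}$ exists. To obtain the quantitative inequality, I would fix an arbitrary nonempty factor $B$ and apply Lemma~\ref{lemma: 1} with the growing prefix $\omega[1,n]$, giving $|\slope{B} - \slope{\omega[1,n]}| < \frac{M}{|B|} + \frac{M}{n}$, and then let $n \to \infty$.

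The only genuinely delicate point is this last limiting step: the conclusion of Lemma~\ref{lemma: 1} is a strict inequality, but after letting $n \to \infty$ the term $\frac{M}{n}$ vanishes and the strict inequality must be relaxed to the non-strict bound $|\slope{B} - \slope{\omega}| \le \frac{M}{|B|}$ stated in the theorem. I would emphasize that this is exactly why the theorem is phrased with $\le$ rather than $<$, and that the same constant $M$ from Theorem~\ref{thm: 2} serves throughout, so no worsening of the constant is needed in this direction.
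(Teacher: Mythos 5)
Your proposal is correct and follows essentially the same route as the paper: both directions reduce to Theorem~\ref{thm: 2}, the Cauchy property of $\{\slope{\omega[1,n]}\}_n$ via Lemma~\ref{lemma: 1} gives existence of the slope, and the quantitative bound comes from comparing $B$ with a long prefix. Your passage to the limit $n \to \infty$ in Lemma~\ref{lemma: 1} is just a cleaner packaging of the paper's $\varepsilon$-argument, and your remark about the strict inequality relaxing to $\le$ is exactly the right point of care.
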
 

\begin{proof} Assume that $\omega$ has bounded additive complexity and let $M$ be as in Theorem~\ref{thm: 2}. Lemma~\ref{lemma: 1} shows that the sequence $\{\slope{\omega[1,n]} \}_{n=1}^\infty$ is Cauchy, and hence $\slope{\omega}$ exists. 

Let $\varepsilon > 0$ be given. Select $N \in \mathbb{N}$ with $| \slope{\omega[1,n]} - \slope{\omega}| < \varepsilon$ and $M < n\varepsilon$, for all integers $n \geq N$. By Lemma~\ref{lemma: 1}, 
\begin{align*} |\slope{B} - \slope{\omega}| &\leq | \slope{B} - \slope{\omega[1,n]}| + |\slope{\omega[1,n]} - \slope{\omega}| \\
&< \frac{M}{|B|} + \frac{M}{n} + \varepsilon < \frac{M}{|B|} + 2\varepsilon. \end{align*}
Since $\varepsilon>0$ was arbitrary, we are done.

Conversely, suppose that $\slope{\omega}$ exists and there is some $M \in \mathbb{N}$ such that for every nonempty factor $B$ of $\omega$,
\[ |\slope{B} - \slope{\omega}| \leq \frac{M}{|B|}. \]
Then for every factor $B'$ of $\omega$ with $|B| = |B'|$,
\begin{align*} \left|\sum B - \sum B' \right| &\leq \left|\sum B - |B|\slope{\omega}\right| + \left| \sum B' - |B'|\slope{\omega} \right| \leq 2M, \end{align*}
and the result follows from Theorem~\ref{thm: 2}. 
\end{proof}

\subsection{Comparing Factors of Two Words with Bounded Additive Complexity} \label{sub: 2.2}

As a first application of Theorem~\ref{thm: 3}, we show that if two words with bounded additive complexity have differing slopes, then their factor sets must have a finite intersection.

\begin{lemma} \label{lemma: 2}
Suppose that $\omega_1 \in S^\mathbb{N}$ and $\omega_2 \in T^\mathbb{N}$ have bounded additive complexity. If $\slope{\omega_1} \not= \slope{\omega_2}$, there exists $M \in \mathbb{N}$ such that $\slope{B_1} \not= \slope{B_2}$ whenever $B_1 \in \mathcal{F}(\omega_1), B_2 \in \mathcal{F}(\omega_2)$ and $|B_1|,|B_2| \geq M$. 
\end{lemma}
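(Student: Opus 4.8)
The plan is to contrapose the gap between the two slopes against the rate-of-convergence bound supplied by Theorem~\ref{thm: 3}. Write $\alpha = \slope{\omega_1}$ and $\beta = \slope{\omega_2}$; both limits exist by Theorem~\ref{thm: 3} since each word has bounded additive complexity. Set $\delta = |\alpha - \beta|$, which is strictly positive by hypothesis. Applying Theorem~\ref{thm: 3} separately to $\omega_1$ and to $\omega_2$ produces constants $M_1, M_2 \in \mathbb{N}$ such that every nonempty factor $B_1$ of $\omega_1$ satisfies $|\slope{B_1} - \alpha| \leq M_1/|B_1|$, and every nonempty factor $B_2$ of $\omega_2$ satisfies $|\slope{B_2} - \beta| \leq M_2/|B_2|$.

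The key step is a single triangle-inequality estimate. Suppose toward a contradiction that $B_1 \in \mathcal{F}(\omega_1)$ and $B_2 \in \mathcal{F}(\omega_2)$ are long factors with $\slope{B_1} = \slope{B_2}$. Then
\[
\delta = |\alpha - \beta| \leq |\alpha - \slope{B_1}| + |\slope{B_2} - \beta| \leq \frac{M_1}{|B_1|} + \frac{M_2}{|B_2|}.
\]
If we also require $|B_1|, |B_2| \geq M$, the right-hand side is at most $(M_1 + M_2)/M$. Thus equality of slopes for long factors forces $\delta \leq (M_1 + M_2)/M$.

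To finish, I would simply choose $M$ large enough to break this inequality: any integer $M > (M_1 + M_2)/\delta$ works, since then $(M_1 + M_2)/M < \delta$, contradicting the displayed bound. Hence no such pair $B_1, B_2$ of length at least $M$ can share a slope, which is exactly the conclusion. The argument is essentially immediate once Theorem~\ref{thm: 3} is in hand; the only point requiring a little care is that the two words live over possibly different alphabets $S$ and $T$, so the convergence constants $M_1$ and $M_2$ are obtained from two independent applications of Theorem~\ref{thm: 3} and must be combined, and the resulting threshold $M$ depends quantitatively on the slope gap $\delta$. I do not anticipate a genuine obstacle here, as the whole lemma reduces to packaging the two rate bounds with the triangle inequality and a threshold choice.
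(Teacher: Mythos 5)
Your argument is correct and is essentially identical to the paper's: both apply Theorem~\ref{thm: 3} twice to obtain the constants $M_1,M_2$, then use the triangle inequality to show that equal slopes of long factors would force $|\slope{\omega_1}-\slope{\omega_2}| \leq M_1/M + M_2/M$, and finally pick $M$ large enough to contradict this (the paper phrases the threshold as $\max\{M_1/M, M_2/M\} < \tfrac{1}{2}|\slope{\omega_1}-\slope{\omega_2}|$, which is equivalent to your choice $M > (M_1+M_2)/\delta$). No gaps.
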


\begin{proof}
Suppose $\slope{\omega_1} \not= \slope{\omega_2}$, and use Theorem~\ref{thm: 3} to get positive integers $M_1$ and $M_2$ large enough that 
\[ \left| \slope{B_1} - \slope{\omega_1} \right| \leq \frac{M_1}{|B_1|} \qquad \forall B_1 \in \mathcal{F}(\omega_1) \]
and 
\[ \left| \slope{B_2} - \slope{\omega_2} \right| \leq \frac{M_2}{|B_2|} \qquad \forall B_2 \in \mathcal{F}(\omega_2). \]
Select $M \in \mathbb{N}$ so that
\[ \max\left\{ \frac{M_1}{M}, \frac{M_2}{M} \right\} < \frac{1}{2}\left| \slope{\omega_1} - \slope{\omega_2} \right|, \]
and let $B_1 \in \mathcal{F}(\omega_1)$ and $B_2 \in \mathcal{F}(\omega_2)$ be given with $|B_1|,|B_2| \geq M$. Then $\slope{B_1}$ cannot equal $\slope{B_2}$, or else
\begin{align*} \left|\slope{\omega_1} - \slope{\omega_2} \right| &= \left| \slope{\omega_1} - \slope{B_1} + \slope{B_2} - \slope{\omega_2} \right| \\
&\leq \left| \slope{\omega_1} - \slope{B_1} \right| + \left| \slope{B_2} - \slope{\omega_2} \right| \\
&\leq \frac{M_1}{M} + \frac{M_2}{M} < \left| \slope{\omega_1} - \slope{\omega_2} \right|,
\end{align*}
a contradiction.
\end{proof}

Since equal words have equal slopes, the following is immediate. 

\begin{cor} 
Under the hypotheses of Lemma~\ref{lemma: 2}, the set $\mathcal{F}(\omega_1) \cap \mathcal{F}(\omega_2)$ is finite.
\end{cor}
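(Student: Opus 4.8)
The plan is to leverage Lemma~\ref{lemma: 2} directly, exploiting the trivial observation quoted in the remark: a single word has exactly one slope. Specifically, I would first invoke Lemma~\ref{lemma: 2} to produce the threshold $M \in \mathbb{N}$ with the property that $\slope{B_1} \neq \slope{B_2}$ for every $B_1 \in \mathcal{F}(\omega_1)$ and $B_2 \in \mathcal{F}(\omega_2)$ whose lengths are both at least $M$. This is licensed because the hypotheses of the corollary are exactly those of the lemma.

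Next I would argue that every common factor is short. Suppose $B \in \mathcal{F}(\omega_1) \cap \mathcal{F}(\omega_2)$ with $|B| \geq M$. Taking $B_1 = B_2 = B$ in the conclusion of Lemma~\ref{lemma: 2}, both have length at least $M$, so the lemma forces $\slope{B} = \slope{B_1} \neq \slope{B_2} = \slope{B}$, which is absurd. Hence every element of $\mathcal{F}(\omega_1) \cap \mathcal{F}(\omega_2)$ has length strictly less than $M$.

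Finally I would conclude by a counting argument. Every common factor uses only letters of $S \cap T$, which is finite since $S$ and $T$ are finite alphabets, and every such factor has one of the finitely many lengths $0, 1, \dots, M-1$. As there are only finitely many words of each fixed length over a finite alphabet, the set $\mathcal{F}(\omega_1) \cap \mathcal{F}(\omega_2)$ is contained in a finite set and is therefore finite.

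I do not anticipate any genuine obstacle here: the entire content of the corollary is packaged in Lemma~\ref{lemma: 2}, and the only additional ingredient is the elementary fact that a finite alphabet admits finitely many words of bounded length. The one point deserving a sentence of care is the reduction $B_1 = B_2 = B$, which is precisely what makes ``equal words have equal slopes'' contradict the lemma's conclusion for long common factors.
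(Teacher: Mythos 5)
Your proof is correct and is essentially the paper's own argument: the paper disposes of this corollary with the single remark that ``equal words have equal slopes,'' which is precisely your reduction $B_1 = B_2 = B$ contradicting the conclusion of Lemma~\ref{lemma: 2} for any common factor of length at least $M$. The closing observation that a finite alphabet admits only finitely many words of length less than $M$ is the same (implicit) finishing step.
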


The converse to the above corollary is false. To see this, we consider the periodic words $\omega_1,\omega_2 \in \{0,1\}^\mathbb{N}$ given by $\omega_1 = 010101 \cdots$ and $\omega_2 = 001100110011 \cdots$. It is readily checked that $\omega_1,\omega_2$ both have bounded additive complexity and slope equal to $\frac{1}{2}$; however, $0101$ is a factor of every sufficiently long factor of $\omega_1$, whereas $0101 \notin \mathcal{F}(\omega_2)$. Hence $\mathcal{F}(\omega_1) \cap \mathcal{F}(\omega_2)$ is finite, even though $\slope{\omega_1} = \slope{\omega_2}$. 

We have as of yet been unable to prove or disprove the converse of Lemma~\ref{lemma: 2}. Therefore we leave this, and a related question, as open problems. 

\begin{q} Prove or disprove the converse of Lemma~\ref{lemma: 2}.
\end{q}

\begin{q} Let $\omega_1 \in S^\mathbb{N}$ and $\omega_2 \in T^\mathbb{N}$ have bounded additive complexity, and suppose further that $\slope{\omega_1} = \slope{\omega_2}$. Under what conditions are there infinitely many pairs $(B_1,B_2) \in \mathcal{F}(\omega_1) \times \mathcal{F}(\omega_2)$ such that $|B_1| = |B_2|$ and $\slope{B_1} = \slope{B_2}$? 
\end{q}

\subsection{Rationality of Slope} \label{sub: 2.3}

In this section we are concerned with necessary and sufficient conditions for the slope of a word $\omega$ with bounded additive complexity to be rational.
We observe that Sturmian words, as balanced binary words, are words with bounded additive complexity. Hence, bounded additive complexity does not imply rationality of slope \cite[ch.\ 2]{lothaire}.

Recalling that by Theorem~\ref{thm: 1}, $\omega$ contains an additive $k$-power for every $k \in \mathbb{N}$, we first have the following.

\begin{thm} \label{thm: 4}
Let $\omega \in S^\mathbb{N}$ have bounded additive complexity and assume that $ \slope{\omega} = p/q$, where  $p$ and $q\geq1$ are relatively prime integers. There exists $K \in \mathbb{N}$ such that any additive $k$-power in $\omega$ with $k \geq K$ has slope $p/q$.
\end{thm}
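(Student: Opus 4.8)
The plan is to apply Theorem~\ref{thm: 3} to an additive $k$-power viewed as a single long factor of $\omega$, and then to exploit an integrality constraint on its slope. The crucial point is that although each individual block of an additive $k$-power has fixed length, the whole power has length growing linearly in $k$, so the deviation bound furnished by Theorem~\ref{thm: 3} tightens as $k$ grows.

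First I would record the elementary observation that the slope of an additive $k$-power equals the common slope of its blocks. Indeed, if $W = \omega_1\omega_2\cdots\omega_k$ is an additive $k$-power in which each block has length $\ell \geq 1$ and sum $\sigma$, then $|W| = k\ell$ and $\sum W = k\sigma$, so $\slope{W} = \sigma/\ell$. Since $W \in \mathcal{F}(\omega)$, Theorem~\ref{thm: 3} supplies an $M \in \mathbb{N}$ (depending only on $\omega$) with
\[ |\slope{W} - \slope{\omega}| = \left| \frac{\sigma}{\ell} - \frac{p}{q} \right| \leq \frac{M}{k\ell}. \]

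Next I would clear denominators. Multiplying the displayed inequality by $q\ell$ gives $|q\sigma - p\ell| \leq Mq/k$. The quantity $q\sigma - p\ell$ is an integer, so as soon as $Mq/k < 1$, that is, for every $k > Mq$, it must vanish; equivalently $\sigma/\ell = p/q$. Taking $K = Mq + 1$ therefore guarantees that every additive $k$-power with $k \geq K$ has slope exactly $p/q$, which completes the argument.

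The main subtlety to be careful about is applying Theorem~\ref{thm: 3} to the \emph{whole} power $W$ rather than to a single block: an individual block $\omega_i$ has length only $\ell$, yielding the weaker bound $|q\sigma - p\ell| \leq Mq$, which does not force vanishing. It is precisely the factor of $k$ gained from the length $|W| = k\ell$ that drives the error below $1$. I would also note that coprimality of $p$ and $q$ is not actually needed for the equality $\sigma/\ell = p/q$; the integrality of $q\sigma - p\ell$ alone suffices, and the hypothesis $\gcd(p,q) = 1$ merely fixes the representative of the slope. Finally, I would implicitly restrict attention to nontrivial powers with $\ell \geq 1$, since the slope of an empty factor is undefined.
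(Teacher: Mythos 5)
Your proposal is correct and follows essentially the same route as the paper: both apply Theorem~\ref{thm: 3} to the entire additive power (whose length $k\ell$ grows with $k$), clear denominators to obtain an integer bounded in absolute value by $Mq/k$, and conclude it vanishes once $k > Mq$. The paper phrases this with $K=(M+1)q$ rather than $Mq+1$, but the argument is the same.
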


\begin{proof}
By Theorem~\ref{thm: 2}, there exists $M \in\mathbb{N}$ such that if $B_1$ and $B_2$ are factors of $\omega$ with equal lengths, then $|\sum B_1 - \sum B_2| \leq M$. Let $j = M + 1$ and let $J = J_1J_2\cdots J_{jq}$ be any additive $jq$-power in $\omega$. By Theorem~\ref{thm: 3},
\begin{align*} \frac{M}{jq|J_1|} \geq \left| \slope{J} - \slope{\omega} \right| = \left| \frac{jq \sum J_1}{jq |J_1|} - \frac{p}{q} \right| = \frac{1}{q|J_1|}\left| q\sum J_1 - p|J_1| \right|, \end{align*}
whence
\[ 1 > \frac{M}{j} \geq \left|q \sum J_1 - p |J_1| \right|. \]
Since $q\sum J_1$ and $p |J_1|$ are integers,  $q \sum J_1 = p|J_1|$. Thus 
\[ \slope{J} = \frac{1}{|J_1|}\sum J_1 = \frac{p}{q} = \slope{\omega}, \]
and consequently every additive $k$-power in $\omega$ with $k \geq jq$ has slope equal to $p/q$.
\end{proof}

Next we give a necessary and sufficient condition for rationality of the slope of a word with bounded additive complexity.

\begin{lemma} \label{lemma: 3}
Let  $\omega \in S^\mathbb{N}$ be a word with bounded additive complexity. Then $\slope{\omega}$ is rational if and only if there exists some infinite collection $\mathcal{C}$ of factors of $\omega$ such that for any $B,B' \in \mathcal{C}$, $\slope{B} = \slope{B'}$.
\end{lemma}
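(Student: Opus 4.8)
The plan is to prove the two implications separately, drawing almost entirely on the machinery already in place. I expect the reverse implication to be the cleaner of the two. Suppose such an infinite collection $\mathcal{C}$ exists, and let $r$ denote the common slope of its members. First I would observe that $r$ is automatically rational: every $B \in \mathcal{C}$ is a finite word over the integer alphabet $S$, so $\slope{B} = \frac{1}{|B|}\sum B \in \mathbb{Q}$, and $r$ equals this value for any single $B \in \mathcal{C}$. It then remains only to show $\slope{\omega} = r$. Since $S$ is finite, there are at most $|S|^n$ factors of each length $n$, so an infinite collection $\mathcal{C}$ must contain factors of arbitrarily large length; accordingly I would choose $B_1, B_2, \dots \in \mathcal{C}$ with $|B_j| \to \infty$. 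By Theorem~\ref{thm: 3}, which applies since $\omega$ has bounded additive complexity, $\slope{\omega}$ exists and there is $M \in \mathbb{N}$ with $|\slope{B_j} - \slope{\omega}| \leq M/|B_j|$ for every $j$. Letting $j \to \infty$ forces $\slope{\omega} = r \in \mathbb{Q}$.

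For the forward implication, suppose $\slope{\omega} = p/q$ with $p$ and $q \geq 1$ relatively prime. Here I would construct the desired collection directly out of additive powers. By Theorem~\ref{thm: 1}, $\omega$ contains an additive $k$-power for every $k \in \mathbb{N}$, and by Theorem~\ref{thm: 4} there is a threshold $K$ such that every additive $k$-power in $\omega$ with $k \geq K$ has slope $p/q$. For each $k \geq K$ I would fix one such additive $k$-power $J^{(k)}$. Because an additive $k$-power has length at least $k$, the lengths $|J^{(k)}|$ are unbounded, so $\mathcal{C} = \{ J^{(k)} : k \geq K \}$ is genuinely an infinite collection of factors of $\omega$, and by construction all of its members share the slope $p/q$.

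In both directions the substantive content is supplied by the earlier results: Theorem~\ref{thm: 3} in the reverse direction, and Theorems~\ref{thm: 1} and~\ref{thm: 4} in the forward direction, so this lemma is largely an assembly of those. The only point needing a short argument is the infinitude of the collections. In the reverse direction this is precisely where finiteness of $S$ enters, guaranteeing that an infinite $\mathcal{C}$ contains arbitrarily long factors and hence a subsequence of lengths tending to infinity; in the forward direction it follows from the length lower bound $|J^{(k)}| \geq k$. I do not anticipate a serious obstacle, the main point of care being to invoke Theorem~\ref{thm: 4} only for indices $k$ beyond its threshold $K$.
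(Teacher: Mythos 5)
Your proposal is correct and follows essentially the same route as the paper: the forward direction is exactly the paper's appeal to Theorem~\ref{thm: 4} (with the additive powers supplied by Theorem~\ref{thm: 1}), and the reverse direction is the paper's argument of extracting $B_j \in \mathcal{C}$ with $|B_j| \to \infty$ and applying Theorem~\ref{thm: 3} to force $\slope{\omega}$ to equal the common (rational) slope. Your explicit remarks on why $\mathcal{C}$ must contain arbitrarily long factors and why the collection of additive powers is infinite are points the paper leaves implicit, but they introduce no divergence in method.
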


\begin{proof} 
If $\slope{\omega}$ is rational, the existence of such a collection $\mathcal{C}$ follows from Theorem~\ref{thm: 4}.

Conversely, assume such a collection $\mathcal{C}$ exists. We let $\{B_i: i \in \mathbb{N}\} \subset \mathcal{C}$ be such that $|B_i| \geq i$, $i \geq 1$. Theorem~\ref{thm: 3} gives $M \in \mathbb{N}$ such that for any $i$,
\begin{align*} |\slope{B_1} - \slope{\omega}| &\leq |\slope{B_1} - \slope{B_i}| + |\slope{B_i} - \slope{\omega}| \\
&= |\slope{B_i} - \slope{\omega}| \leq \frac{M}{|B_i|} \leq \frac{M}{i}. 
\end{align*}
Hence,  $\slope{B_1} = \slope{\omega}$. In particular, $\slope{\omega}$ is rational. 
\end{proof}

Lemma~\ref{lemma: 3} is perhaps better re-phrased as a statement about words $\omega$ with bounded additive complexity and irrational slope. In particular, it implies that $\omega$ has only finitely many factors of slope equal to any number. One might then surmise that the number of such factors has a uniform bound independent of the given rational; however, our next result implies that this is not the case. 

\begin{cor} Let $\omega \in S^\mathbb{N}$ have bounded additive complexity. Then

(i) $\slope{\omega}$ is irrational if and only if for every $\alpha \in \mathbb{Q}$, the set
\[ \mathcal{F}_{\alpha}(\omega) = \left\{B \in \mathcal{F}(\omega): \slope{B} = \alpha \right\} \]
is finite; 

(ii) for every $k \in \mathbb{N}$, there exists $\alpha \in \mathbb{Q}$ such that $|\mathcal{F}_\alpha(\omega)| > k$.
\end{cor}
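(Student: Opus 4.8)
```latex
The plan is to prove the two parts of the corollary as consequences of Lemma~\ref{lemma: 3}, treating part (i) as essentially a reformulation and part (ii) as the genuine content.

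For part (i), I would observe that $\slope{\omega}$ is irrational precisely when there is \emph{no} infinite collection $\mathcal{C}$ of factors all sharing a common slope. This is exactly the negation of the condition in Lemma~\ref{lemma: 3}. Now, if $\mathcal{F}_\alpha(\omega)$ were infinite for some $\alpha \in \mathbb{Q}$, then $\mathcal{F}_\alpha(\omega)$ would itself serve as such a collection $\mathcal{C}$ (every pair of its members has slope $\alpha$), forcing $\slope{\omega}$ to be rational by Lemma~\ref{lemma: 3}. Conversely, if $\slope{\omega} = p/q$ is rational, Theorem~\ref{thm: 4} produces infinitely many additive $k$-powers all of slope $p/q$, and each such power (being a factor of $\omega$ of slope $p/q$) lies in $\mathcal{F}_{p/q}(\omega)$, so $\mathcal{F}_{p/q}(\omega)$ is infinite. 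Taking contrapositives on both directions yields: $\slope{\omega}$ irrational $\iff$ every $\mathcal{F}_\alpha(\omega)$ is finite. So part (i) is a clean repackaging of Lemma~\ref{lemma: 3} together with Theorem~\ref{thm: 4}.

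For part (ii) the statement is unconditional in the sense that it must hold whether or not $\slope{\omega}$ is rational, so I would want a uniform construction producing, for each target $k$, a rational $\alpha$ whose slope-class $\mathcal{F}_\alpha(\omega)$ is large. The natural source of many equal-slope factors is again Theorem~\ref{thm: 4}: fix $k$, and by Theorem~\ref{thm: 1} the word $\omega$ contains an additive $N$-power $J = J_1 J_2 \cdots J_N$ for $N$ as large as we like. Every prefix $J_1 J_2 \cdots J_i$ for $1 \le i \le N$ is a factor of $\omega$, and because the blocks $J_1,\dots,J_N$ all have the same length and the same sum, each of these $N$ prefixes has exactly the same slope, namely $\slope{J_1}$. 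Setting $\alpha = \slope{J_1} \in \mathbb{Q}$, the $N$ distinct factors $J_1, J_1J_2, \dots, J_1\cdots J_N$ all lie in $\mathcal{F}_\alpha(\omega)$ and have distinct lengths (hence are distinct), so $|\mathcal{F}_\alpha(\omega)| \ge N > k$ once $N > k$.

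The main obstacle I anticipate is ensuring the $N$ prefixes are genuinely distinct elements of $\mathcal{F}_\alpha(\omega)$ and that $\alpha$ is the \emph{same} rational for all of them; both follow once I argue that a prefix consisting of $i$ equal-sum, equal-length blocks has slope independent of $i$. Concretely, if $|J_1| = \ell$ and $\sum J_1 = s$, then $J_1 \cdots J_i$ has length $i\ell$ and sum $is$, giving slope $s/\ell$ regardless of $i$, which is the crux of the counting. One subtlety worth flagging is that part (ii) deliberately contrasts with the finiteness in part (i): when $\slope{\omega}$ is irrational, every individual $\mathcal{F}_\alpha(\omega)$ is finite by part (i), yet part (ii) still forces arbitrarily large \emph{individual} classes as $\alpha$ varies. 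I would therefore emphasize that the rational $\alpha$ produced in part (ii) depends on $k$ (indeed on the chosen additive power), so there is no contradiction: no single $\alpha$ need have an infinite class, but the sizes are unbounded across different $\alpha$.
```
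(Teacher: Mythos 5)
Your proof is correct. Part (i) is essentially the paper's argument: the author simply notes that (i) ``follows directly from Lemma~\ref{lemma: 3},'' and your two directions (an infinite $\mathcal{F}_\alpha(\omega)$ is itself a collection $\mathcal{C}$ as in Lemma~\ref{lemma: 3}; rational slope yields an infinite $\mathcal{F}_{p/q}(\omega)$ via Theorem~\ref{thm: 4} together with Theorem~\ref{thm: 1}) are exactly the intended unpacking. Part (ii) is where you genuinely diverge: the paper disposes of it by citing a theorem of Brown, which asserts that (ii) holds for \emph{any} infinite word over a finite integer alphabet, with no complexity hypothesis at all. You instead give a self-contained argument inside the paper's own machinery: take an additive $N$-power $J_1\cdots J_N$ with $N>k$ (Theorem~\ref{thm: 1}, which is where bounded additive complexity is used), and observe that the $N$ prefixes $J_1\cdots J_i$ have length $i\ell$ and sum $is$, hence common slope $s/\ell$ and pairwise distinct lengths. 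This trade-off is worth noting: your route avoids the external reference but only establishes (ii) under the corollary's standing hypothesis of bounded additive complexity, whereas the cited result shows the phenomenon is independent of that hypothesis. The only point you might make explicit is that the blocks $J_i$ produced by Theorem~\ref{thm: 1} are nonempty (so $\ell\geq 1$, the slope is defined, and the prefixes really have distinct lengths); this is implicit in the theorem being nontrivial, but it is the one place your counting could otherwise collapse.
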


\begin{proof} The statement (i) follows directly from Lemma~\ref{lemma: 3}, and by a theorem of Brown \cite{brown12}, (ii) holds for any infinite word over a finite set of integers. 
\end{proof}

Let $\omega \in S^\mathbb{N}$ have bounded additive complexity with $\slope{\omega} =  p/q$, for some relatively prime $p,q \in \mathbb{Z}$. We will associate with $\omega$ a finite coloring
of the natural numbers which we will use to provide us with better insight into the structure of $\omega$. First, we use Theorem~\ref{thm: 3} to get $M \in \mathbb{N}$ such that
$$\left| \sum \omega[1,n] - \frac{np}{q} \right| \leq M$$
for all $ n \in \mathbb{N}$.
We define the coloring $\chi_\omega: \mathbb{N} \mapsto [-M,M]$  by
\[ \chi_\omega(m) = \sum \omega[1,mq] - mq \cdot \slope{\omega}=\sum \omega[1,mq] - mp.\] 

The coloring $\chi_{\omega}$ has the following property.

\begin{lemma} \label{lemma: 4} Let $\omega \in S^\mathbb{N}$ have bounded additive complexity with $\slope{\omega}= p/q$ for some relatively prime $p,q \in \mathbb{Z}$. Then for  $m>n$, $\chi_\omega(m) = \chi_\omega(n)$ if and only if $\slope{\omega[nq + 1, mq]} = p/q$.
\end{lemma}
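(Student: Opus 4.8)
The plan is to prove both directions simultaneously by a single direct computation of the difference $\chi_\omega(m) - \chi_\omega(n)$, since the equivalence turns out to be an algebraic identity rather than something requiring the full strength of bounded additive complexity. First I would unwind the definition of the coloring: writing
\[ \chi_\omega(m) - \chi_\omega(n) = \left( \sum \omega[1,mq] - mp \right) - \left( \sum \omega[1,nq] - np \right), \]
and invoking additivity of the sum over the concatenation $\omega[1,mq] = \omega[1,nq]\,\omega[nq+1,mq]$, which gives $\sum \omega[1,mq] - \sum \omega[1,nq] = \sum \omega[nq+1,mq]$, I would collapse this to
\[ \chi_\omega(m) - \chi_\omega(n) = \sum \omega[nq+1,mq] - (m-n)p. \]

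The next step is the bookkeeping of lengths. The interval $[nq+1,mq]$ contains exactly $mq - (nq+1) + 1 = (m-n)q$ positions, so the factor $\omega[nq+1,mq]$ has length $(m-n)q$ and therefore
\[ \slope{\omega[nq+1,mq]} = \frac{1}{(m-n)q}\sum \omega[nq+1,mq]. \]
From here both directions fall out at once: the condition $\slope{\omega[nq+1,mq]} = p/q$ is equivalent, after multiplying through by $(m-n)q$, to $\sum \omega[nq+1,mq] = (m-n)p$, which by the displayed identity is precisely the statement $\chi_\omega(m) - \chi_\omega(n) = 0$, i.e.\ $\chi_\omega(m) = \chi_\omega(n)$.

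There is no genuine obstacle in this argument; the equivalence is a formal consequence of the definitions, and the only place requiring care is the off-by-one count of the interval length, to confirm the denominator is $(m-n)q$ rather than $(m-n)q + 1$. It is worth remarking that neither the boundedness of the additive complexity nor the coprimality of $p$ and $q$ is actually used in establishing this equivalence; those hypotheses enter only to guarantee (via Theorem~\ref{thm: 3}) that $\chi_\omega$ is well-defined as a map into the finite range $[-M,M]$, which is what makes the lemma useful for the pigeonhole-style arguments that presumably follow.
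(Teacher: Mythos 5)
Your proof is correct and is essentially identical to the paper's own argument: both unwind the definition of $\chi_\omega$, use additivity of the sum over the concatenation $\omega[1,mq] = \omega[1,nq]\,\omega[nq+1,mq]$, and observe that the factor has length $(m-n)q$ so that the vanishing of $\chi_\omega(m)-\chi_\omega(n)$ is equivalent to the slope condition. Your closing remark that neither boundedness of additive complexity nor coprimality is needed for the equivalence itself is also accurate.
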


\begin{proof} Since $\chi_\omega(m) = \chi_\omega(n)$ if and only if
\[ \sum \omega[1,mq] - mp = \sum \omega[1,nq] - np, \]
it follows that for $m>n$, $\chi_\omega(m) = \chi_\omega(n)$ if and only if
\[ \slope{\omega[nq + 1, mq]} = \frac{1}{mq - nq} \sum \omega[nq + 1, mq] = \frac{p}{q}. \qedhere \]
\end{proof}

The next result may be viewed as a stronger version of Theorem~\ref{thm: 1}, under the additional hypothesis that the slope of the word in question is rational.

\begin{thm} \label{thm: 5} 
Let $\omega \in S^\mathbb{N}$ have bounded additive complexity with $\slope{\omega} \in \mathbb{Q}$. For every $k,\ell \in \mathbb{N}$, there exists $M(k,\ell) \in \mathbb{N}$ such that every $B \in \mathcal{F}(\omega)$ of length $M(k,\ell)$ contains an additive $\ell$-power $B_1B_2\ldots B_\ell$ such that $k$ divides $|B_1|$ and $\slope{B_i}=p/q$, $i=1,2,\ldots,\ell$.
\end{thm}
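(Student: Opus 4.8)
The plan is to reduce the statement to van der Waerden's theorem by way of a finite coloring of the type introduced in Lemma~\ref{lemma: 4}. Write $\slope{\omega} = p/q$ in lowest terms and use Theorem~\ref{thm: 3} to fix $M \in \mathbb{N}$ with $|\slope{C} - p/q| \le M/|C|$ for every nonempty factor $C$ of $\omega$. Multiplying through by $|C|$, this reads $|\sum C - |C|p/q| \le M$; in particular, whenever $q$ divides $|C|$ the quantity $\sum C - |C|p/q$ is an integer lying in $[-M,M]$, so it can take at most $2M+1$ values. This finiteness of the palette is exactly what bounded additive complexity buys us through Theorem~\ref{thm: 3}, and it is what will make van der Waerden applicable.

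Now fix $k$ and $\ell$, and let $B \in \mathcal{F}(\omega)$ be a factor whose length is the multiple $qR$ of $q$ to be specified below. Define a coloring $c \colon \{0,1,\dots,R\} \to [-M,M]\cap\mathbb{Z}$ by $c(j) = \sum B[1,jq] - jp$, which by the previous paragraph (applied to the factor $B[1,jq]$) is well defined and uses at most $2M+1$ colors. Exactly as in the proof of Lemma~\ref{lemma: 4}, for $n < m$ one has $c(m) = c(n)$ if and only if $\slope{B[nq+1,mq]} = p/q$. The goal is therefore to locate a monochromatic arithmetic progression $a_0 < a_1 < \dots < a_\ell$ for $c$ whose common difference is divisible by $k$: setting $B_i = B[a_{i-1}q+1, a_iq]$, each $B_i$ then has slope $p/q$ by the displayed equivalence, the blocks all share the common length $q(a_1 - a_0)$, and equal length together with equal slope forces equal sum. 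Hence $B_1B_2\cdots B_\ell$ is an additive $\ell$-power with $|B_1| = q(a_1-a_0)$ divisible by $k$, precisely as required.

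To produce such a progression I would subsample the coloring at multiples of $k$. Let $W = W(2M+1,\ell+1)$ be the van der Waerden number guaranteeing a monochromatic arithmetic progression of length $\ell+1$ in any $(2M+1)$-coloring of $W$ consecutive integers, set $R = kW$, and accordingly take $M(k,\ell) = qkW$. Applying van der Waerden's theorem to the induced coloring $i \mapsto c(ik)$ of $\{0,1,\dots,W\}$ yields indices $i_0, i_0+t, \dots, i_0+\ell t$ on which $c(ik)$ is constant; putting $a_j = (i_0 + jt)k$ gives a monochromatic progression for $c$ with common difference $tk$ divisible by $k$ and with $a_\ell \le R$, so all blocks lie inside $B$.

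The only genuine obstacle is enforcing $k \mid |B_1|$ simultaneously with $\slope{B_i} = p/q$. Both constraints are discharged at once by running van der Waerden on the $k$-dilated coloring $i \mapsto c(ik)$ rather than on $c$ itself: dilation makes the resulting common difference automatically a multiple of $k$, while monochromaticity delivers the slope condition through the Lemma~\ref{lemma: 4} equivalence. The remaining bookkeeping (checking $a_\ell \le R$ and that equal lengths give equal sums) is routine.
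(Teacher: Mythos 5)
Your proposal is correct and follows essentially the same route as the paper: both arguments color indices by the deviation $\sum\omega[1,jq]-jp$ (bounded via Theorem~\ref{thm: 3}), invoke the Lemma~\ref{lemma: 4} equivalence between monochromaticity and slope $p/q$, and apply van der Waerden so as to force the common difference to be a multiple of $k$ (the paper phrases this as van der Waerden applied to $k\mathbb{N}$, you achieve it by dilating the coloring by $k$ --- the same device). The only cosmetic difference is that you define the coloring intrinsically on the factor $B$ rather than globally on $\omega$, which spares you the paper's bookkeeping about where $B$ sits relative to multiples of $q$.
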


\begin{proof}
Let $k$ and $\ell$ be given positive integers. By van der Waerden's Theorem \cite{vdW} applied to the set $k\mathbb{N}$ there exists, for any positive integer $n$, a positive integer $N=N(n,k,\ell)$ such that any $n$-coloring of $N$ consecutive positive integers contains a monochromatic $(\ell+1)$-term arithmetic progression with gap a multiple of $k$.

Let $\slope{\omega} = p/q$, for some relatively prime $p,q \in \mathbb{Z}$, and let $M$ and the coloring $\chi_\omega: \mathbb{N} \mapsto [-M,M]$ be as above. Let $M(k,\ell)=q(N(2M+1,k,\ell)+1)$, let $m \geq 1$ be arbitrary, and let $B=\omega[m,m+M(k,\ell)-1]$. Let $t\in \mathbb{N}$ be such that $m\in ((t-1)q,tq]$. Then, from $tq<m+q$, it follows that
\begin{align*}
q(t+N(2M+1,k,\ell)) &=tq+qN(2M+1,k,\ell)\leq m+q+qN(2M+1,k,\ell) - 1 \\
&= m + M(k,\ell) - 1,
\end{align*}
and we conclude that $\{jq:t\leq j\leq t+N(2M+1,k,\ell)\}\subset [m,m+M(k,\ell)-1]$. Let $\{a+idk:i=0,1,\ldots \ell\}$ be a $\chi_\omega$-monochromatic $(\ell+1)$-term arithmetic progression with gap $dk$, for some natural number $d$, that is contained in the interval $[t, t+N(2M+1,k,\ell)-1]$. For factors $B_i=\omega[q(a+(i-1)dk)+1,q(a+idk)]$, $i=1,\ldots,\ell$,  we have that $|B_i|=dkq$ and, by Lemma~\ref{lemma: 4}, $\slope{B_i}=p/q$.
\end{proof}

\subsection{Factorizations of Words with Bounded Additive Complexity and Rational Slope} \label{sub: 2.4}

Recall that a balanced binary word is eventually periodic if and only if its slope is rational. The following result shows that a generalization of this property holds for words with bounded additive complexity. 

\begin{thm} \label{thm: 6}
Let  $\omega \in S^\mathbb{N}$ have bounded additive complexity. Then $\slope{\omega} \in \mathbb{Q}$ if and only if $\omega = AB_1B_2B_3 \cdots$, where $\{B_i\}_{i \in \mathbb{N}}$ is a sequence of nonempty factors of $\omega$ such that $\slope{B_1} = \slope{B_i}$ for all $i \in \mathbb{N}$. Moreover, $\slope{B_1} = \slope{\omega}$.
\end{thm}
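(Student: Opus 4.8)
The plan is to prove the two implications separately, leaning on Lemma~\ref{lemma: 3}, Theorem~\ref{thm: 3}, and the finite coloring $\chi_\omega$ introduced just before Lemma~\ref{lemma: 4}.

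For the ``if'' direction, I would start from a factorization $\omega = AB_1B_2B_3\cdots$ in which every $B_i$ has slope $s := \slope{B_1}$, and form the prefix blocks $C_n = B_1B_2\cdots B_n$, each of which is a factor of $\omega$ (occurring immediately after $A$). Since $\sum C_n = \sum_{i=1}^n \sum B_i = s\sum_{i=1}^n |B_i| = s\,|C_n|$, each $C_n$ has slope exactly $s$. Because the $B_i$ are nonempty, the lengths $|C_n|$ strictly increase, so the $C_n$ are pairwise distinct and $\{C_n : n \in \mathbb{N}\}$ is an infinite collection of factors sharing the common slope $s$. Lemma~\ref{lemma: 3} then gives $\slope{\omega} \in \mathbb{Q}$. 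To identify $s$ with $\slope{\omega}$, I would invoke Theorem~\ref{thm: 3} to obtain $M$ with $|\slope{C_n} - \slope{\omega}| \le M/|C_n|$, and let $n \to \infty$; as $|C_n| \to \infty$ and $\slope{C_n} = s$ is constant, this forces $s = \slope{\omega}$, whence $\slope{B_1} = \slope{\omega}$.

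For the ``only if'' direction, which carries the real content, I would exploit the finiteness of the coloring. Writing $\slope{\omega} = p/q$ with $\gcd(p,q)=1$, the map $\chi_\omega \colon \mathbb{N} \to [-M,M]$ attains at most $2M+1$ values, so by the pigeonhole principle some color is attained infinitely often, say at indices $n_0 < n_1 < n_2 < \cdots$. Setting $A = \omega[1,n_0 q]$ and $B_i = \omega[n_{i-1}q+1, n_i q]$ for $i \ge 1$ recovers $\omega = AB_1B_2B_3\cdots$, and each block is nonempty since $|B_i| = (n_i - n_{i-1})q \ge q$. As $\chi_\omega(n_i) = \chi_\omega(n_{i-1})$ for every $i$, Lemma~\ref{lemma: 4} yields $\slope{B_i} = p/q$, so in particular $\slope{B_1} = \slope{B_i} = \slope{\omega}$, which is exactly the desired factorization.

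The argument is largely a matter of assembling the machinery already in place: the coloring $\chi_\omega$ together with Lemma~\ref{lemma: 4} converts the slope condition into a purely combinatorial statement about monochromatic indices, and the crux is the pigeonhole extraction of an infinite monochromatic set. I do not expect a genuine obstacle here. The only points that require care are verifying that the blocks $B_i$ are nonempty and that their concatenation with $A$ accounts for all of $\omega$ with no gaps or overlaps; both follow from the strict monotonicity of the chosen indices $n_i$ and the fact that $[n_{i-1}q+1, n_i q]$ tile the positive integers beyond $n_0 q$.
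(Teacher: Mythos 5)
Your proposal is correct and follows essentially the same route as the paper: Lemma~\ref{lemma: 3} for the ``if'' direction, and pigeonhole on the coloring $\chi_\omega$ combined with Lemma~\ref{lemma: 4} for the ``only if'' direction. Your use of the cumulative blocks $C_n = B_1\cdots B_n$ is in fact slightly more careful than the paper's one-line appeal to Lemma~\ref{lemma: 3}, since it guarantees a genuinely infinite collection of distinct factors of common slope even when the $B_i$ repeat as words, and it cleanly delivers the ``moreover'' clause via Theorem~\ref{thm: 3}.
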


\begin{proof} Assuming that $\omega = AB_1B_2B_3 \cdots$ with $\slope{B_1} = \slope{B_i}$, for all $i \in \mathbb{N}$, we deduce that $\slope{\omega}$ is rational by Lemma~\ref{lemma: 3}.

On the other hand, assume $p/q = \slope{\omega} \in \mathbb{Q}$ for some relatively prime $p,q \in \mathbb{Z}$. By the pigeonhole principle, there exists an increasing sequence $\{x_i\}_{i=1}^\infty$ of positive integers which is monochromatic under $\chi_\omega$. By Lemma~\ref{lemma: 4}, the proof is complete with $A = \omega[1,x_1q]$, and for all $i \in \mathbb{N}$, $B_i = \omega[x_iq + 1, x_{i+1}q]$.
\end{proof}

For $\omega \in S^\mathbb{N}$ with bounded additive complexity and rational slope, there may not exist a factorization as in Theorem~\ref{thm: 6} where $\{|B_i|\}_{i \in \mathbb{N}}$ is bounded, even if $\omega$ is recurrent. We construct an example to illustrate this point.

To begin the construction, let $X_n' \in [1,n]^*$ denote the finite word obtained from concatenating all elements of $[1,n]^n$ in lexicographical order, from least to greatest, and consider the word $\omega' = X_1'X_2'X_3' \cdots \in \mathbb{N}^\mathbb{N}$. 

We define the word $\omega = X_1X_2X_3 \cdots$, where 
\[ X_n = \begin{cases} 01^{\omega'(n)}2, &\textup{if $\omega'(n)$ is odd;} \\ 
21^{\omega'(n)} 0, &\textup{otherwise.} \end{cases} \]
The word $\omega$ is recurrent since $\omega'$ is recurrent.
To show that $\omega$ has bounded additive complexity, we let $B \in \mathcal{F}(\omega)$ be given, and observe that $B$ can be written in the form $B = Y_iX_{i+1}X_{i+2} \cdots X_{i + k} Y_{i + k + 1}$, where $Y_i$ is a suffix of $X_i$ and $Y_{i + k + 1}$ is a prefix of $X_{i + k + 1}$. Hence
\[ \sum B = \sum Y_i + \sum Y_{i + k + 1} + |X_{i+1}X_{i+2} \cdots X_{i + k}|, \]
where $|Y_i| - 1 \leq \sum Y_i \leq |Y_i| + 1$ and $|Y_{i+k}| - 1 \leq \sum Y_{i+k} \leq |Y_{i+k}| + 1$, and this implies that $|B| - 2 \leq \sum B \leq |B| + 2$.
Therefore, if $B' \in \mathcal{F}^{|B|}(\omega)$ is given, then $\left| \sum B - \sum B' \right| \leq 4$.
By Theorem~\ref{thm: 2}, $\omega$ has bounded additive complexity. Moreover, $\slope{\omega} = 1 \in \mathbb{Q}$ by Theorem~\ref{thm: 6}, since $\slope{X_i} = 1$ for all $i \in \mathbb{N}$.

All that remains is to show that $\omega$ cannot be written as $\omega = AB_1B_2B_3 \cdots$, where $\slope{B_i} = 1$ for all $i \in \mathbb{N}$ and there exists $M$ such that $0 < |B_i| < M $ for all $i \in \mathbb{N}$. For this, consider, for a given $n \in \mathbb{N}$, the sequence $\{k_{n,i}\}_{i \in \mathbb{N}}$ defined by the following rule: $k_{n,1}$ is the least positive integer such that $\omega[n,k_{n,1}]$ has slope $1$, and for $i > 1$, $k_{n,i}$ is the least positive integer such that $\omega[k_{n,i-1} + 1,k_{n,i}]$ has slope $1$. Then it is enough to show, for all $n \geq 1$, that the sequence $\{k_{n,i}\}_{i \in \mathbb{N}}$ does not have bounded gaps. For this, note that $001^m221^{m+1}00$ occurs infinitely many times, for all odd $m \in \mathbb{N}$. Let $001^m221^{m+1}00 = \omega[\ell + 1, \ell + 2m + 7]$ with $\ell > k_{n,1}$, and assume for a contradiction that $\{k_{n,i}\}_{i \in \mathbb{N}}$ has no gap of size at least $m$. Thus, for some $i \in \mathbb{N}$, one of the following holds:

(a) $k_{n,i} = \ell$; 

(b) $k_{n,i} = \ell + 1$;

(c) $k_{n,i} = \ell + 2$;

(d) $k_{n,i} = \ell + h$, for some $h \in [3,m-1]$.

In case (a), $k_{n,i+1} = \ell + m + 4$, giving a gap of size $m + 4$. In case (b), $k_{n,i+1} = \ell + m + 3$, giving a gap of size $m + 2$. In case (c), $k_{n,i + j} = k_{n,i + j - 1} + 1$ for all $j = 1,2,\dots,m$, whence $k_{n,i + m} = \ell + m + 2$. Moreover, $k_{n,i + m + 1} = \ell + 2m + 7$, giving a gap size of $m + 5$. In case (d), $k_{n,i + j} = k_{n,i + j - 1} + 1$ for $j=1,2,\dots,m - h + 2$, and so $k_{n,i + m - h + 2} = \ell + m + 2$. As in case (c), $k_{n,i + m - h + 3} = \ell + 2m + 7$, and we obtain a gap size of $m + 5$. Since $m$ was an arbitrary odd positive integer, and all cases lead to a contradiction, the proof is complete.

The above example should be contrasted with the following result.

\begin{thm} \label{thm: 7}
Let $\omega \in S^\mathbb{N}$ be a uniformly recurrent word with bounded additive complexity, and suppose that $\slope{\omega} \in \mathbb{Q}$. Then $\omega$ admits a factorization of the form $\omega = AB_1B_2B_3\cdots$, where $\{B_i\}_{i \in \mathbb{N}}$ is a sequence of nonempty factors of $\omega$ such that: 

(i) $\slope{B_i} = \slope{\omega}$ for all $i \in \mathbb{N}$;

(ii) $\{|B_i|\}_{i \in \mathbb{N}}$ is bounded.
\end{thm}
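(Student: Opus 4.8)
The plan is to produce the breakpoints of the desired factorization as a syndetic level set of a single bounded ``height'' function, and then to invoke uniform recurrence to bound the gaps between consecutive breakpoints. Write $\slope{\omega}=p/q$ with $\gcd(p,q)=1$, and define $g\colon\mathbb{N}\mapsto\mathbb{Z}$ by
\[ g(n)=q\sum\omega[1,n]-pn, \]
the position-indexed counterpart of $\chi_\omega$ (indeed $g(mq)=q\chi_\omega(m)$). By Theorem~\ref{thm: 3} we have $\left|\sum\omega[1,n]-(p/q)n\right|\le M$ for some $M$, so $g$ is a bounded, integer-valued function with finite range. The advantage of $g$ is that $g(b)-g(a)=q\sum\omega[a+1,b]-p(b-a)$ depends only on the factor $\omega[a+1,b]$, that $g(a)=g(b)$ holds exactly when $\slope{\omega[a+1,b]}=p/q$, and that, since $\gcd(p,q)=1$, $g(a)=g(b)$ forces $q\mid(b-a)$. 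Thus any increasing sequence of positions on which $g$ is constant yields, via its consecutive differences, a factorization into blocks of slope $p/q$, and this factorization has bounded block lengths precisely when the sequence is syndetic.

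It therefore suffices to exhibit a value $c$ for which $\{n:g(n)=c\}$ is syndetic. First I would let $c^\ast$ and $c_\ast$ denote, respectively, the largest and smallest values attained by $g$ infinitely often; since $g$ has finite range, all other values occur at only finitely many positions, so there is a threshold $n_1$ beyond which $c_\ast\le g(n)\le c^\ast$ for all $n\ge n_1$. If $c^\ast=c_\ast$ then $g$ is eventually constant and every sufficiently large position lies in the level set, so we may assume $D:=c^\ast-c_\ast\ge1$.

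The key device is an ``anchoring'' factor. Fixing a floor position $m_0>n_1$ (with $g(m_0)=c_\ast$) and then a ceiling position $n_0>m_0$ (with $g(n_0)=c^\ast$), both of which exist since $c_\ast,c^\ast$ recur, set $U_0=\omega[m_0+1,n_0]$, a nonempty factor with $g(n_0)-g(m_0)=D$. Now consider any occurrence of $U_0$ past the threshold, say $U_0=\omega[m'+1,n']$ with $m'\ge n_1$. Then $g(n')-g(m')=D$, while $c_\ast\le g(m')$ and $g(n')\le c^\ast$; these three relations force $g(n')=c^\ast$ (and $g(m')=c_\ast$). In other words, every occurrence of $U_0$ beyond $n_1$ must end at a ceiling position. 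Because $\omega$ is uniformly recurrent, $U_0$ occurs with bounded gaps, so its occurrences, and hence their right endpoints, form a syndetic set; discarding the finitely many occurrences starting before $n_1$, we obtain a syndetic set of positions contained in $\{n:g(n)=c^\ast\}$. Enumerating it as $e_1<e_2<\cdots$ with $e_{i+1}-e_i$ bounded, I would finish by setting $A=\omega[1,e_1]$ and $B_i=\omega[e_i+1,e_{i+1}]$: each $B_i$ is nonempty with $g(e_i)=g(e_{i+1})=c^\ast$, hence $\slope{B_i}=p/q=\slope{\omega}$, and $|B_i|=e_{i+1}-e_i$ is uniformly bounded, giving (i) and (ii).

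I expect the main obstacle to be the anchoring itself: uniform recurrence controls only \emph{where} factors of $\omega$ recur, a purely local condition, whereas $g(n)$ is a global quantity not a priori determined by any bounded window. The span-$D$ factor $U_0$ is exactly what reconciles the two: its charge equals the full oscillation of $g$, so the boundedness constraints $c_\ast\le g\le c^\ast$ pin both of its endpoints to extremal values wherever it appears, converting a local recurrence statement into the global conclusion that ceiling positions are syndetic. The remaining bookkeeping (that $U_0$ is nonempty, that suitable floor and ceiling positions lie beyond $n_1$, and that the endpoints of occurrences inherit the gap bound of the occurrences) is routine.
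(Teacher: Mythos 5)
Your proof is correct. It shares the paper's broad strategy---find a special factor whose occurrences, syndetic by uniform recurrence, mark out a level set of the discrepancy function---but the special factor and the argument pinning it down are genuinely different. The paper sets $d(B)=\sum B-|B|\slope{\omega}$, chooses $B$ with $|d(B)|$ maximal over all factors (possible since $d$ takes values in the discrete bounded set $\frac{1}{q}\mathbb{Z}\cap[-M,M]$), and shows that the gap $A$ to the next occurrence of $B$ satisfies $d(A)=-d(B)$ by combining the identity $d(BAB)=2d(B)+d(A)$ with maximality applied to both $BAB$ and $A$; the blocks of the factorization are then the words $BA$. You instead take $U_0$ to be a factor whose charge equals the full eventual oscillation $c^\ast-c_\ast$ of the prefix function $g$, so that the squeeze $c_\ast\le g\le c^\ast$ (valid past a threshold) forces every late occurrence of $U_0$ to end at a ceiling position, and your blocks run between consecutive such right endpoints. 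Your version trades the paper's algebraic maximality trick for the bookkeeping of the threshold $n_1$ and the finitely many early occurrences; in exchange it produces the breakpoints directly as a syndetic level set of $g$, which connects cleanly with the coloring $\chi_\omega$ of Lemma~\ref{lemma: 4} and the factorization of Theorem~\ref{thm: 6}. Both arguments rest on the same two inputs---Theorem~\ref{thm: 3} for boundedness and discreteness of the discrepancy, and uniform recurrence for syndeticity---and are equally elementary.
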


\begin{proof}
For a given factor $B$ of $\omega$, let $d(B) = \sum B - |B|\slope{\omega}$.
Since $\slope{\omega}$ is rational, Theorem~\ref{thm: 3} lets us pick a particular $B \in \mathcal{F}(\omega)$ such that $|d(B)|$ is as large as possible, and we assume without loss that $\sum B > \slope{\omega} |B|$. Since $\omega$ is uniformly recurrent, there is a least positive integer $K = K(|B|)$ such that every factor of length at least $K$ has $B$ as a factor. 
Suppose $\omega[i+1,j] = B$, select the minimum $i' \in [j+2, j + K + 1]$ such that $B = \omega[i' +1,i' + j - i]$, and let $A = \omega[j+1,i']$. 

We claim that $BA = \omega[i + 1, i']$ has slope equal to that of $\omega$. If this claim holds, then the above argument may be repeated with $B = \omega[i' + 1, i' + j - 1]$ in place of $B = \omega[i + 1, j]$, and since $|BA| < |B| + K + 2$ the result follows. Thus, it suffices to prove the claim, and we proceed to this end. 
Observe that maximality of $d(B)$ implies $d(B) \geq  d(BAB)  = 2d(B) + d(A)$.
In other words $d(A) \leq - d(B)$,
and since $|d(B)|$ is maximum, $d(A) = - d(B)$.
Hence $d(BA) = d(B) + d(A) = 0$,
so that $\slope{BA} = \slope{\omega}$.
\end{proof}

\subsection{Contraction} \label{sub: 2.5}

The results developed in the last two subsections suggest that, in a word $\omega \in S^\mathbb{N}$ with bounded additive complexity and rational slope $p/q$, there is an abundance of intervals $[m,n]$ such that $\slope{\omega[m,n]} = p/q$. It is tempting, then, to wonder how the removal of some of these intervals affects the slope of $\omega$. The next result shows that after contracting (i.e., removing) any set $\mathcal{I}$ of intervals such that $\slope{\omega[m,n]} = p/q$ for every $[m,n] \in \mathcal{I}$, we are left with a word with bounded additive complexity and slope $p/q$.

If $\mathcal{I}$ is a set of intervals, its elements are \emph{separated} if whenever $[i,j], [k,\ell] \in \mathcal{I}$ with $i \leq k$, we have $j + 1 < k$.

\begin{defn} 
Given a word $\omega \in S^\mathbb{N}$ and $i,j \in \mathbb{N}$, we can form a word $\omega'$ given by $\omega'(k) = \omega(k)$ for $k < i$ and $\omega'(k) = \omega(k + j -i + 1)$ for $k \geq i$. We say that $\omega'$ is obtained from $\omega$ by \emph{contraction of $[i,j]$}, and we write $\omega' = \omega \setminus [i,j]$. We extend this definition in the natural way to contraction of any set $\mathcal{I}$ of separated intervals. Similarly, if $\omega'$ is the word obtained from $\omega$ via contraction of $\mathcal{I}$, we will write $\omega' = \omega \setminus \mathcal{I}$.
\end{defn}

\begin{thm} \label{thm: 8}
Let $\omega \in S^\mathbb{N}$ be a word with bounded additive complexity, and suppose $p/q = \slope{\omega} \in \mathbb{Q}$. If $\mathcal{I}$ is a set of separated intervals such that $\slope{\omega[m,n]} = p/q$ for all $[m,n] \in \mathcal{I}$, then $\omega \setminus \mathcal{I}$ has bounded additive complexity and $\slope{\omega \setminus \mathcal{I}} = p/q$.
\end{thm}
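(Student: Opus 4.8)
The plan is to apply Theorem~\ref{thm: 3} to $\omega' := \omega \setminus \mathcal{I}$. For that it suffices to produce a constant $M'$ for which $\slope{\omega'} = p/q$ together with the estimate $|\slope{B'} - p/q| \leq M'/|B'|$ for every nonempty factor $B'$ of $\omega'$. The guiding idea is that each removed interval has slope exactly $p/q$, so deleting it shifts the running sum by precisely $p/q$ times its length, leaving the deviation from the line of slope $p/q$ unchanged.

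First I would record the discrepancy of $\omega$ itself: since $\slope{\omega} = p/q$, Theorem~\ref{thm: 3} furnishes an $M \in \mathbb{N}$ with $|D(n)| \leq M$ for all $n$, where $D(n) := \sum\omega[1,n] - (p/q)n$. Next I set up the order-preserving bijection $\phi$ from the positions of $\omega'$ onto the positions of $\omega$ not lying in $\bigcup\mathcal{I}$ (the ``kept'' positions), so that $\omega'(k) = \omega(\phi(k))$. Separation of the intervals guarantees that $\omega \setminus \mathcal{I}$ is well defined and that $\omega'$ is infinite, and clearly $\phi(n) \geq n \to \infty$.

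The crux is the following bookkeeping. For a prefix $\omega'[1,n]$, the kept positions in $[1,\phi(n)]$ are exactly $\phi(1) < \cdots < \phi(n)$, so $\phi(n) - n$ positions in $[1,\phi(n)]$ are removed; moreover, every removed position $\leq \phi(n)$ belongs to an interval of $\mathcal{I}$ entirely contained in $[1,\phi(n)]$, because $\phi(n)$ is a kept position and hence no interval of $\mathcal{I}$ contains it. Summing the removed intervals, each of which contributes $(p/q)$ times its length, the total removed sum is $(p/q)(\phi(n) - n)$. Hence
\[ \sum\omega'[1,n] = \sum\omega[1,\phi(n)] - \frac{p}{q}\bigl(\phi(n) - n\bigr) = \frac{p}{q}n + D(\phi(n)), \]
so the discrepancy of $\omega'$ at $n$ equals $D(\phi(n))$, which is bounded by $M$; this already gives $\slope{\omega'} = p/q$, since $D(\phi(n))/n \to 0$. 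For a general factor $B' = \omega'[a,b]$ I would subtract this prefix identity at $b$ and at $a-1$ to obtain $\sum B' = (p/q)|B'| + D(\phi(b)) - D(\phi(a-1))$ (with $D(\phi(0)) := 0$), whence $\bigl|\sum B' - (p/q)|B'|\bigr| \leq 2M$ and therefore $|\slope{B'} - p/q| \leq 2M/|B'|$. Theorem~\ref{thm: 3} then yields that $\omega'$ has bounded additive complexity with slope $p/q$.

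I expect the only genuine obstacle to be the careful verification that every removed interval meeting the span of a factor is \emph{entirely} contained in it, i.e., that the two endpoints of a factor of $\omega'$ pull back under $\phi$ to kept positions that cannot be interior to any interval of $\mathcal{I}$. This is exactly where the separatedness hypothesis and the well-definedness of $\omega \setminus \mathcal{I}$ enter. Once it is in hand, the sum of any factor of $\omega'$ collapses to a difference of two values of the bounded discrepancy $D$, and the remainder of the argument is routine.
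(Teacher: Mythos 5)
Your proposal is correct and follows essentially the same route as the paper: both lift a factor of $\omega\setminus\mathcal{I}$ back to a factor of $\omega$, observe that the deleted intervals sitting wholly inside it contribute exactly $\tfrac{p}{q}$ times their total length (so the discrepancy $\sum - \tfrac{p}{q}\cdot\text{length}$ is unchanged), and then apply Theorem~\ref{thm: 3} in both directions. The only difference is organizational --- you telescope prefix discrepancies $D(\phi(n))$ via a position bijection, while the paper writes out the factorization $\omega = A_1B_1A_2B_2\cdots$ explicitly --- and your observation that a kept endpoint cannot lie inside any interval of $\mathcal{I}$ correctly closes the one point needing care.
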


\begin{proof}
Since every suffix of $\omega$ also has bounded additive complexity and slope $p/q$, we may assume that $\mathcal{I}$ is infinite. Hence, $\mathcal{I} = \{[i_k,j_k]: k \in \mathbb{N}\}$, for some $j_k \geq i_k > j_{k-1} + 1 \geq i_{k-1}$, $k \geq 1$. We put $B_k = \omega[i_k,j_k]$.

Let $A_1 = \omega[1,i_1 - 1]$ if $i_1 > 1$, and otherwise let $A_1$ be empty. For $k > 1$, let $A_k$ be the nonempty word such that $\omega[i_{k-1}, j_k] = B_{k-1}A_kB_k$. Thus $\omega = A_1B_1A_2B_2A_3B_3 \cdots$, and we must show that $\omega \setminus \mathcal{I} = A_1A_2A_3 \cdots$ has bounded additive complexity and slope $p/q$. Let $A \in \mathcal{F}(\omega\setminus \mathcal{I})$ and write $A = A_{\ell}' A_{\ell + 1} \cdots A_{m-1} A_{m}'$, for some $\ell < m \in \mathbb{N}$, where $A_{\ell}'$ (\emph{resp.} $A_{m}'$) is a possibly empty prefix (\emph{resp.} suffix) of $A_{\ell}$ (\emph{resp.} $A_m$). Let $B = B_{\ell}B_{\ell + 1} \cdots B_{m-1}$. By Theorem~\ref{thm: 3} there exists $M \in \mathbb{N}$, independent of $A$, such that 
\begin{align*} M &\geq \left| \sum A_{\ell}'B_{\ell}A_{\ell + 1} B_{\ell + 1} \cdots B_{m-1} A_{m}' - \frac{p}{q}|A_{\ell}'B_{\ell}A_{\ell + 1} B_{\ell + 1} \cdots B_{m-1} A_{m}'| \right| \\
&= \left| \sum B + \sum A - \frac{p}{q} |A| - \frac{p}{q} |B| \right| = \left| \sum A - \frac{p}{q} |A| \right|, 
\end{align*}
whence 
\[ \left| \slope{A} - \frac{p}{q} \right| \leq \frac{M}{|A|}.  \]
A second application of Theorem~\ref{thm: 3} yields the desired result.
\end{proof}

\section{Bounded Additive Complexity and Morphisms} \label{sec: 3}

A map $\phi: S^\infty \mapsto T^\infty$ is called a \emph{morphism} if $\phi(BB') = \phi(B)\phi(B')$ for all $B \in S^*$ and $B' \in S^\infty$. Clearly, $\phi$ is uniquely determined by its assignments on the elements of $S$, and we say that $\phi$ is \emph{generated} by these assignments. We call $\phi$ a \emph{non-erasing} morphism if no nonempty word maps to the empty word.

Non-erasing morphisms receive special attention in combinatorics on words because, being compatible with the concatenation of words, they relate the form of the input word with the form of the output word. Below we investigate the effect of non-erasing morphisms on boundedness of additive complexity. 

\subsection{Anchors} \label{sub: 3.1}

We begin by introducing the following terminology. 

\begin{defn}
If $\phi: S^\infty \mapsto T^\infty$ is a non-erasing morphism, we say that $\phi$ \emph{anchors} $\omega \in S^\mathbb{N}$ if $\phi(\omega) \in T^\mathbb{N}$ has bounded additive complexity. If $\phi$ anchors every element of $S^\mathbb{N}$, then $\phi$ is said to be an \emph{anchor}. 
\end{defn}

In this section we give necessary and sufficient conditions for $\phi$ to be an anchor and an explicit arithmetic description of all anchors from $S^\infty$ to $T^\infty$. We note that, as a result of the following theorem, whether or not a morphism is an anchor may be checked in linear time depending on the size of $S$.

\begin{thm} \label{thm: 9}
Let $\phi: S^\infty \mapsto T^\infty$ be a non-erasing morphism. Then $\phi$ is an anchor if and only if $\slope{\phi(s)} = \slope{\phi(s')}$ for all $s,s' \in S$.
\end{thm}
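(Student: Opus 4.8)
**The plan is to prove Theorem~\ref{thm: 9} by treating the two directions separately, using the slope-based characterization of Theorem~\ref{thm: 3} as the main engine.**

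For the easier direction (necessity), I would argue the contrapositive: suppose $\slope{\phi(s)} \neq \slope{\phi(s')}$ for some $s,s' \in S$. Since $\phi$ is an anchor it must in particular anchor the two constant words $sss\cdots$ and $s's's'\cdots$. But $\phi(sss\cdots) = \phi(s)\phi(s)\phi(s)\cdots$ is periodic with slope $\slope{\phi(s)}$, and similarly for $s'$. To produce a single word that $\phi$ fails to anchor, consider an input word $\omega$ built so that long blocks of $s$'s and long blocks of $s'$'s alternate (with rapidly increasing block lengths). Then $\phi(\omega)$ contains arbitrarily long factors of slope near $\slope{\phi(s)}$ and arbitrarily long factors of slope near $\slope{\phi(s')}$; since these slopes differ, the deviations $|\sum B - |B|\gamma|$ from any fixed candidate slope $\gamma$ grow without bound along one of the two families, so by Theorem~\ref{thm: 3} (or directly by Theorem~\ref{thm: 2}) $\phi(\omega)$ cannot have bounded additive complexity, contradicting that $\phi$ is an anchor.

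For the main direction (sufficiency), assume all letters have a common image slope $\alpha = \slope{\phi(s)}$, and let $\omega \in S^\mathbb{N}$ be arbitrary. I want to bound $|\sum B - \alpha|B||$ uniformly over all factors $B$ of $\phi(\omega)$, which by Theorem~\ref{thm: 3} yields bounded additive complexity for $\phi(\omega)$. The key observation is that any factor $B$ of $\phi(\omega)$ decomposes as $B = P\,\phi(u)\,Q$, where $u$ is a factor of $\omega$, $P$ is a suffix of some $\phi(s)$, and $Q$ is a prefix of some $\phi(s')$. Because $\slope{\phi(s)} = \alpha$ for every letter, we have $\sum \phi(u) = \alpha|\phi(u)|$ exactly for every factor $u$ (each letter contributes its image's sum, which equals $\alpha$ times its image's length). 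Hence the deviation of $B$ from slope $\alpha$ comes only from the two fringe pieces $P$ and $Q$. Setting $L = \max_{s\in S}|\phi(s)|$ and $C = \max_{s\in S}|\sum\phi(s)|$, each fringe contributes at most a constant to $|\sum B - \alpha|B||$, so this quantity is bounded by a constant independent of $B$.

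**The main obstacle** is making the fringe-piece bound fully rigorous, i.e., controlling $|\sum P - \alpha|P||$ and $|\sum Q - \alpha|Q||$ uniformly. Since $P$ is a suffix of an image $\phi(s)$ of bounded length, both $|\sum P|$ and $\alpha|P|$ are bounded by fixed constants depending only on the finite data $\{\phi(s): s \in S\}$ and $\alpha$, so each fringe deviation is at most $C + |\alpha|L$; summing the two fringes gives a uniform bound $M = 2(C + |\alpha|L)$. A small care point is that when $B$ is short enough to lie within a single image $\phi(s)$ the decomposition degenerates (there is no middle block $\phi(u)$), but the same fringe estimate covers this case since $B$ itself is then a factor of a single $\phi(s)$ of bounded length. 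With the uniform bound in hand, Theorem~\ref{thm: 3} immediately gives that $\phi(\omega)$ has bounded additive complexity, and since $\omega$ was arbitrary, $\phi$ is an anchor.
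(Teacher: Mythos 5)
Your proposal is correct and follows essentially the same route as the paper: the sufficiency direction rests on the same decomposition $B = P\,\phi(u)\,Q$ with fringe pieces of length less than $\max_{s\in S}|\phi(s)|$ (the paper writes $U_1\phi(U_2)U_3$), and the necessity direction pumps a pair of letters whose images have different slopes to force unbounded sum discrepancies. The only minor difference is that you control the middle block via the exact identity $\sum \phi(u) = \alpha|\phi(u)|$ and conclude with Theorem~\ref{thm: 3}, whereas the paper compares two equally long factors through Lemma~\ref{lemma: 6} and concludes with Theorem~\ref{thm: 2}; both are sound, and yours is a slight streamlining rather than a genuinely different argument.
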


The proof of Theorem~\ref{thm: 9} relies on the following two lemmas. In both lemmas we take that $S = \{s_1,s_2,\dots,s_k\}$  and that $\phi: S^\infty \mapsto T^\infty$ is a non-erasing morphism.

\begin{lemma} \label{lemma: 5}
The following statements are equivalent:

(i) $\slope{\phi(s)} = \slope{\phi(s')}$ for all $s,s'\in S$;

(ii) For every $B_1,B_2 \in S^*$, $|\phi(B_1)| = |\phi(B_2)|$ implies $\sum \phi(B_1) = \sum \phi(B_2)$.

\end{lemma}

\begin{proof} To see that $(ii)$ implies $(i)$, we prove the contrapositive. Suppose that $(i)$ does not hold. Let $s_i \in S$ be such that $\slope{\phi(s_1)} \not= \slope{\phi(s_i)}$ and put
\[ p = \frac{\lcm(|\phi(s_1)|,|\phi(s_i)|)}{|\phi(s_1)|}, \qquad q = \frac{\lcm(|\phi(s_1)|,|\phi(s_i)|)}{|\phi(s_i)|}. \]
Then 
\[ |\phi(s_1^p)| = p|\phi(s_1)| =
q|\phi(s_i)| = |\phi(s_i^q)|, \] 
but
\[ \sum \phi(s_1^p) = p\sum \phi(s_1) \ne q \sum \phi(s_i) = \sum \phi(s_i^q). \]
So $(ii)$ does not hold.

On the other hand, suppose $(i)$ holds, and let $B \in T^*$ have $B = \phi(B')$ for some $B' \in S^*$. Since 
\[ |B| = \sum_{1 \leq i \leq k} |\phi(s_i)||B'|_{s_i} = \sum_{1 \leq i \leq k} \frac{|\phi(s_1)|\sum \phi(s_i)}{\sum \phi(s_1)} |B'|_{s_i}, \]
we have
\[ \sum B = \sum_{1 \leq i \leq k} \left( |B'|_{s_i} \sum \phi(s_i) \right) = \frac{|B| \sum \phi(s_1)}{|\phi(s_1)|} = |B|\slope{\phi(s_1)}. \]
We conclude that $(ii)$ holds.
\end{proof}

\begin{lemma} \label{lemma: 6}
If condition $(ii)$ in Lemma~\ref{lemma: 4} holds, then 
\[ \max\left\{ \left| \sum \phi(B_1) - \sum \phi(B_2) \right| : B_1,B_2 \in S^*,||\phi(B_1)| - |\phi(B_2)|| \leq 2N - 2\right\} \]
exists for any fixed $N \in \mathbb{N}$.
\end{lemma}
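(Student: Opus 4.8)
The plan is to exploit the fact that condition $(ii)$, being equivalent to condition $(i)$ by Lemma~\ref{lemma: 5}, forces the sum of an image word to be a fixed linear function of its length. First I would recall the computation already carried out in the proof of Lemma~\ref{lemma: 5}: under condition $(i)$, every $B \in S^*$ satisfies $\sum \phi(B) = \theta\, |\phi(B)|$, where $\theta = \slope{\phi(s_1)}$ is the common slope shared by the images of all the letters. Since we are assuming $(ii)$, Lemma~\ref{lemma: 5} makes this identity available to us.

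With this identity in hand, the difference of sums collapses to a multiple of the difference of lengths,
\[ \left| \sum \phi(B_1) - \sum \phi(B_2) \right| = |\theta| \cdot \left| \, |\phi(B_1)| - |\phi(B_2)| \, \right|, \]
for all $B_1, B_2 \in S^*$. I would then observe that the constraint $\left| \, |\phi(B_1)| - |\phi(B_2)| \, \right| \leq 2N-2$ pins the length-difference to the finite set $\{0,1,\dots,2N-2\}$ of non-negative integers. Consequently the set whose maximum is sought is contained in the finite set $\{\, |\theta|\, d : 0 \leq d \leq 2N-2 \,\}$. This value set is non-empty, since it contains $0$ (take $B_1 = B_2$ equal to the empty word), and finite; hence its maximum is attained, which is precisely the assertion of the lemma.

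The genuine content of the argument lies entirely in the reduction of the first paragraph: once the sum of an image is known to depend only on its length through a single fixed slope, the rest is immediate. Rather than a hard obstacle, the point deserving care is that the maximum is taken over the \emph{infinite} index set of all pairs $(B_1,B_2) \in S^* \times S^*$, so one must check that the corresponding set of \emph{values} is finite in order to conclude that a maximum (and not merely a supremum) exists. It is exactly condition $(ii)$, via the linear slope identity, that delivers this finiteness; without it the letter images could carry distinct slopes, and then equal-length image pairs with arbitrarily large sum-differences would arise, so the maximum would fail to be finite. Thus $(ii)$ is used in an essential way, and I would flag this dependence explicitly in the write-up. (If one prefers to avoid invoking $\theta$, the same conclusion follows from noting that each value $\left| \sum \phi(B_1) - \sum \phi(B_2) \right|$ is a non-negative integer bounded above by $|\theta|(2N-2)$, and a bounded set of non-negative integers has a maximum.)
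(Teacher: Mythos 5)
Your proof is correct and takes essentially the same approach as the paper's: both arguments reduce to the fact that condition $(ii)$ forces $\sum \phi(B) = \theta\,|\phi(B)|$ for a single fixed $\theta$, so the sum-difference is controlled by the length-difference, which is at most $2N-2$. The only cosmetic difference is that the paper re-derives the equal-slopes fact for the given pair $B_1,B_2$ by the powers argument and bounds $\left|\sum\phi(B_2)\right|$ by $|\phi(B_2)|\max_{t\in T}|t|$, whereas you invoke Lemma~\ref{lemma: 5} directly and obtain the exact identity, together with a slightly more careful justification that the maximum (not merely the supremum) is attained.
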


\begin{proof} Let $B_1,B_2 \in S^*$ satisfy $||\phi(B_1)| - |\phi(B_2)|| \leq 2N - 2$.
If $\slope{\phi(B_1)} \ne \slope{\phi(B_2)}$,
putting
\[ p = \frac{\lcm(|\phi(B_1)|,|\phi(B_2)|)}{|\phi(B_1)|}, \qquad q = \frac{\lcm(|\phi(B_1)|,|\phi(B_2)|)}{|\phi(B_2)|} \]
yields $|\phi(B_1^p)| = |\phi(B_2^q)|$ yet $\sum \phi(B_1^p) \ne \sum \phi(B_2^q)$, contradicting our hypothesis. Thus,
\[ \sum \phi(B_1) - \frac{|\phi(B_1)|}{|\phi(B_2)|}\sum \phi(B_2) = 0. \]
We assume that $|\phi(B_2)| \geq |\phi(B_1)|$. Now,
\begin{align*} \left|\sum \phi(B_1) - \sum \phi(B_2)\right| &= \left|\frac{|\phi(B_1)|}{|\phi(B_2)|} - 1 \right|\left| \sum \phi(B_2)\right| \leq \left|\frac{|\phi(B_1)|}{|\phi(B_2)|} - 1 \right| |\phi(B_2)|\max S_2\\
&\leq (2N - 2)\max S_2. \qedhere \end{align*} 
\end{proof}

We are now ready to prove Theorem~\ref{thm: 9}.

\begin{proof}[Proof of Theorem~\ref{thm: 9}]
By Lemma~\ref{lemma: 5}, it suffices to prove that $\phi$ is an anchor if and only if $|\phi(B_1)| = |\phi(B_2)|$ implies $\sum \phi(B_1) = \sum \phi(B_2)$, for every $B_1,B_2 \in S^*$. 

We first assume this condition does not hold, and show that $\phi$ is not an anchor. Therefore,  let $B_1,B_2 \in S^*$ be such that $|\phi(B_1)| = |\phi(B_2)|$, but $\sum \phi(B_1) - \sum \phi(B_2) = k >0$. Then for $m \in \mathbb{N}$, $B_1^m,B_2^m$ satisfy 
\[ |\phi(B_1^m)| = m|\phi(B_1)| = m|\phi(B_2)| = |\phi(B_2^m)| \]
and 
\[ \sum\phi(B_1^m) - \sum\phi(B_2^m) = m\left(\sum \phi(B_1) - \sum\phi(B_2)\right) = mk \to \infty \] 
as $m \to \infty$. By Theorem~\ref{thm: 2}, the image of $\omega = B_1B_2B_1^2B_2^2B_1^3B_2^3 \cdots \in S^\mathbb{N}$
under $\phi$ does not have bounded additive complexity, and $\phi$ is not an anchor.

Conversely, we assume that the condition holds and show that $\phi$ is an anchor.  To this end, let $\omega \in S^\mathbb{N}$ and let $B_1,B_2 \in T^*$ be equally long factors of $\phi(\omega)$. Write $B_1 = U_1\phi(U_2)U_3$ and $B_2 = V_1\phi(V_2)V_3$, where $U_2,V_2 \in S^*$, $U_1,U_3,V_1,V_3 \in T^*$, and 
\[ |U_1|,|U_3|,|V_1|,|V_3| < N = \max_{s \in S_1} |\phi(s)|, \]
so that $||\phi(U_2)| - |\phi(V_2)|| \leq 2N - 2$.
By Lemma~\ref{lemma: 6}
\[ M = \max\left\{ \left| \sum \phi(U) - \sum \phi(V) \right| : U,V \in S^*,||\phi(U)| - |\phi(V)|| \leq 2N - 2\right\} \]
exists, and since $\{U \in T^*: |U| < N\}$ is finite, we may set
\[ M' = \max\left\{\left|\sum U - \sum V \right|: U,V \in T^*, |U|,|V|<N \right\} \]
to obtain 
\begin{align*} \left| \sum B_1 - \sum B_2 \right| &= \left| \left(\sum U_1 - \sum V_1 \right) + \left(\sum \phi(U_2) - \sum \phi(V_2)\right) + \left(\sum U_3 - \sum V_3\right) \right| \\
&\leq \left|\sum U_1 - \sum V_1\right| + \left|\sum \phi(U_2) - \sum \phi(V_2) \right| + \left|\sum U_3 - \sum V_3 \right| \\
&\leq 2M' + M.
\end{align*}
By Theorem~\ref{thm: 2}, the morphism $\phi$ is an anchor.
\end{proof}

If $\phi: S^\infty \mapsto T^\infty$ is an anchor, the map $a \mapsto \slope{\phi(a)}$ takes only one value $||\phi||$, and we call this value the \emph{weight} of $\phi$.

\begin{cor}
Let  $\phi: S^\infty \mapsto T^\infty$ be an anchor and $\omega \in S^\mathbb{N}$. Then $\slope{\phi(\omega)} = ||\phi||$.
\end{cor}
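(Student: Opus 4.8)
The plan is to show that $\slope{\phi(\omega)}$ exists and then to evaluate it along a convenient cofinal subsequence of prefix lengths. First, since $\phi$ is an anchor, $\phi(\omega) \in T^\mathbb{N}$ has bounded additive complexity by definition, so Theorem~\ref{thm: 3} guarantees that the limit
\[ \slope{\phi(\omega)} = \lim_{n \to \infty} \slope{\phi(\omega)[1,n]} \]
exists. It therefore suffices to identify the value of this limit, and for that I am free to restrict attention to any subsequence of prefix lengths tending to infinity.

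The key observation is that every image of a finite word already has slope exactly $||\phi||$. Indeed, because $\phi$ is an anchor, Theorem~\ref{thm: 9} gives $\slope{\phi(s)} = \slope{\phi(s')}$ for all $s,s' \in S$, so condition $(i)$ of Lemma~\ref{lemma: 5} holds; the computation in the proof of that lemma shows that for any $B' \in S^*$ with $B = \phi(B')$ one has $\sum B = |B|\,||\phi||$, that is, $\slope{\phi(B')} = ||\phi||$.

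Now I would use the prefixes of $\omega$ as the cofinal family. For each $m \in \mathbb{N}$ set $n_m = |\phi(\omega[1,m])|$; since $\phi$ is non-erasing we have $n_m \geq m$, so $n_m \to \infty$. Because $\phi$ is a morphism, the length-$n_m$ prefix of $\phi(\omega)$ is precisely $\phi(\omega[1,m])$, whence $\slope{\phi(\omega)[1,n_m]} = \slope{\phi(\omega[1,m])} = ||\phi||$ by the previous paragraph. As $\{n_m\}_{m \in \mathbb{N}}$ is a sequence of indices tending to infinity and the full limit exists, the subsequential limit equals the full limit, giving $\slope{\phi(\omega)} = ||\phi||$.

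There is no serious obstacle here; the argument is essentially assembling the available pieces in the right order. The only points requiring care are that one must first invoke Theorem~\ref{thm: 3} to know the limit exists — otherwise evaluating along the subsequence $\{n_m\}_{m \in \mathbb{N}}$ would not pin down the slope — and that the equality $\slope{\phi(B')} = ||\phi||$ is an \emph{exact} identity read off from the proof of Lemma~\ref{lemma: 5}, not merely an asymptotic estimate.
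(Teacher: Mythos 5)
Your proof is correct and follows essentially the same route as the paper: both establish existence of $\slope{\phi(\omega)}$ via Theorem~\ref{thm: 3}, observe that $\slope{\phi(\omega[1,m])} = ||\phi||$ exactly for every prefix, and conclude by noting these prefixes form a subsequence of the prefixes of $\phi(\omega)$. Your extra care in tracing the exact identity $\sum \phi(B') = |\phi(B')|\,||\phi||$ back to the computation in Lemma~\ref{lemma: 5} is a fair elaboration of a step the paper leaves implicit.
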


\begin{proof} By the definition of an anchor and by Theorem~\ref{thm: 3}, $\slope{\phi(\omega)}$ exists. Now, since $\slope{\phi(\omega[1,n])} = ||\phi||$ for all $n \in \mathbb{N}$, and since $\{\slope{\phi(\omega[1,n])}\}_{n\in \mathbb{N}}$ is a subsequence of $\{\slope{\phi(\omega)[1,n]}\}_{n\in \mathbb{N}}$, the result follows.
\end{proof}

Our next result is an extension of Theorem~\ref{thm: 9}, and it provides an explicit arithmetic description of the anchors of a given weight in terms of a matrix equation. 

Suppose $S = \{s_1,\dots,s_k\}$ and $T = \{t_1,\dots,t_\ell\}$, and let $\phi: S^\infty \mapsto T^\infty$ be a non-erasing morphism. We associate a matrix $\mathcal{M}(\phi) \in \mathbb{Z}^{k \times \ell}$ with $\phi$ by defining $\mathcal{M}(\phi)_{i,j} = |\phi(s_i)|_{t_j}$ for all $i=1,2,\dots,k$ and $j=1,2,\dots,\ell$. Also, with every $\alpha \in \mathbb{Q}$ we associate a column vector $\mathbf{t_\alpha} \in \mathbb{Q}^\ell$ by defining $(\mathbf{t_\alpha})_j = t_j - \alpha$ for all $j=1,2,\dots,\ell$.

\begin{cor}
The morphism $\phi: S^\infty \mapsto T^\infty$ is an anchor if and only if there exists $\alpha \in \mathbb{Q}$ such that $\mathcal{M}(\phi)\mathbf{t_\alpha} = \mathbf{0}$.
\end{cor}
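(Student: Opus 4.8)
The plan is to show that the matrix equation $\mathcal{M}(\phi)\mathbf{t_\alpha} = \mathbf{0}$ is merely an arithmetic reformulation of the slope condition in Theorem~\ref{thm: 9}, namely that $\slope{\phi(s_i)}$ is independent of $i$. Once this reformulation is established, the corollary follows at once by invoking Theorem~\ref{thm: 9}.

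First I would compute the $i$-th coordinate of the vector $\mathcal{M}(\phi)\mathbf{t_\alpha}$. By the definitions of the matrix and the vector, this coordinate equals $\sum_{j=1}^\ell |\phi(s_i)|_{t_j}(t_j - \alpha)$. The key observation is the pair of counting identities $\sum_{j=1}^\ell |\phi(s_i)|_{t_j} t_j = \sum \phi(s_i)$ and $\sum_{j=1}^\ell |\phi(s_i)|_{t_j} = |\phi(s_i)|$, which hold because summing over all letters the number of occurrences of each (weighted by the letter, respectively unweighted) recovers the sum, respectively the length, of the word $\phi(s_i)$. Substituting these yields that the $i$-th coordinate equals $\sum \phi(s_i) - \alpha|\phi(s_i)| = |\phi(s_i)|\left(\slope{\phi(s_i)} - \alpha\right)$.

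Next, since $\phi$ is non-erasing we have $|\phi(s_i)| \geq 1$ for every $i$, so the $i$-th coordinate vanishes precisely when $\slope{\phi(s_i)} = \alpha$. Hence $\mathcal{M}(\phi)\mathbf{t_\alpha} = \mathbf{0}$ holds for some $\alpha \in \mathbb{Q}$ if and only if all the slopes $\slope{\phi(s_i)}$ coincide (in which case their common value $\alpha$ is automatically rational, being a sum of integers divided by a positive integer length). By Theorem~\ref{thm: 9}, this common-slope condition is exactly equivalent to $\phi$ being an anchor, which completes the argument.

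I expect no substantial obstacle here; the only points requiring care are verifying the two counting identities and using the non-erasing hypothesis to guarantee $|\phi(s_i)| \neq 0$, so that dividing out the length is legitimate and a vanishing coordinate genuinely forces equality of slopes rather than holding vacuously. The restriction to rational $\alpha$ demands no extra work, since the relevant slopes are rational by construction.
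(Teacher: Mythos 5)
Your proposal is correct and follows essentially the same route as the paper: expanding the $i$-th coordinate of $\mathcal{M}(\phi)\mathbf{t_\alpha}$ via the identities $\sum_j \mathcal{M}(\phi)_{i,j}t_j = \sum\phi(s_i)$ and $\sum_j \mathcal{M}(\phi)_{i,j} = |\phi(s_i)|$, and then invoking Theorem~\ref{thm: 9}. Your additional remarks on non-erasingness and the automatic rationality of the common slope are accurate and only make the argument slightly more explicit than the paper's.
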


\begin{proof}
Theorem~\ref{thm: 9} implies that $\phi$ is an anchor of weight $\alpha \in \mathbb{Q}$ if and only if
\[ \sum_{1 \leq j \leq \ell} \mathcal{M}(\phi)_{i,j} t_j = \sum \phi(s_i)  = \alpha |\phi(s_i)| = \alpha \sum_{1 \leq j \leq \ell} \mathcal{M}(\phi)_{i,j}, \]
for all $i=1,2,\dots,k$.
\end{proof} 

The following general problem has yet to be explored further.

\begin{q}
Let an infinite word $\omega \in S^\mathbb{N}$ and a morphism $\phi: S^\infty \mapsto T^\infty$ be given. Under what circumstances is it decidable whether or not $\phi$ anchors $\omega$?
\end{q}

\subsection{Preserving Bounded Additive Complexity Under Morphisms} \label{sub: 3.2}

In this section we characterize the words such that their image under any non-erasing morphism has bounded additive complexity. 

Earlier we mentioned the abelian complexity of an infinite word $\omega$, and at this time we present a formal definition thereof.

\begin{defn} Let $\omega \in S^\mathbb{N}$. The \emph{abelian complexity} of $\omega$ is the map $\rho_{\omega}^{\psi}: \mathbb{N} \mapsto \mathbb{N}$ that associates, with each $n \in \mathbb{N}$, the maximum number of distinct elements of $\mathcal{F}^n(\omega)$, no two of which are permutations of each other (the use of the symbol $\psi$ in this notation will be explained in Section~\ref{sec: 5}). 
The abelian complexity of $\omega$ is said to be \emph{bounded} if there exists $M \in \mathbb{N}$ such that $\rho_{\omega}^{\psi}(n) \leq M$ for all $n \in\mathbb{N}$.
\end{defn}

Richomme, Saari, and Zamboni \cite{richomme09} characterized words with bounded abelian complexity in the following way.

\begin{lemma} \label{lemma: 7}
Let  $\omega \in S^\mathbb{N}$. Then $\omega$ has bounded abelian complexity if and only if there exists $C \in \mathbb{N}$ such that for all $n \in \mathbb{N}$ and all $s \in S$, $||B|_s - |B'|_s| < C$ whenever $B,B' \in \mathcal{F}^n(\omega)$.
\end{lemma}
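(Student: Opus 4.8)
The plan is to translate both directions into statements about the spread of each coordinate of the Parikh vector, the crucial ingredient being a discrete continuity property of a sliding window. For each $B \in \mathcal{F}^n(\omega)$ associate the vector $(|B|_s)_{s \in S} \in \mathbb{N}^{|S|}$; two factors are permutations of one another precisely when these vectors coincide, so $\rho_{\omega}^{\psi}(n)$ is exactly the number of distinct such vectors realized by factors of length $n$. The key observation I would establish first is that, for fixed $n \in \mathbb{N}$ and fixed $s \in S$, the value set $V_s(n) = \{|B|_s : B \in \mathcal{F}^n(\omega)\}$ consists of \emph{consecutive} integers. This follows from sliding a window of length $n$ along $\omega$: passing from $\omega[m,m+n-1]$ to $\omega[m+1,m+n]$ deletes the letter $\omega(m)$ and appends $\omega(m+n)$, so the count $|B|_s$ changes by at most $1$ at each step; since every factor of length $n$ arises as such a window, $V_s(n)$ is an interval of integers.

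For the reverse implication, I would suppose such a $C$ exists and fix $n$. Then for each $s$ the integers of $V_s(n)$ lie in an interval of length less than $C$, so $|V_s(n)| \leq C$. Because the vector $(|B|_s)_{s \in S}$ is determined by its $|S|$ coordinates, it lies in the product set $\prod_{s \in S} V_s(n)$, and hence $\rho_{\omega}^{\psi}(n) \leq \prod_{s \in S} |V_s(n)| \leq C^{|S|}$, a bound independent of $n$. Thus $\omega$ has bounded abelian complexity.

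For the forward implication, I would suppose $\rho_{\omega}^{\psi}(n) \leq M$ for all $n$. Projecting the set of distinct length-$n$ vectors onto any single coordinate $s$ can only decrease cardinality, so $|V_s(n)| \leq \rho_{\omega}^{\psi}(n) \leq M$. Invoking the key observation, $V_s(n)$ is a set of consecutive integers of cardinality at most $M$, whence $\max V_s(n) - \min V_s(n) \leq M - 1$. It follows that $||B|_s - |B'|_s| \leq M - 1 < M$ for all $B,B' \in \mathcal{F}^n(\omega)$ and all $s \in S$, so taking $C = M$ finishes the argument. The step I expect to be the crux is precisely the consecutiveness of $V_s(n)$: a bound on the \emph{number} of realized values alone says nothing about how far apart they are, and it is exactly the sliding-window interval property that converts the count bound $M$ into the spread bound $C$. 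Without it the forward direction collapses, so I would take care to verify that every length-$n$ factor genuinely occurs as a window and that the single-step change in each $|B|_s$ is bounded by $1$.
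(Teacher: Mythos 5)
Your proof is correct. Note that the paper itself gives no proof of this lemma --- it is quoted as a known result of Richomme, Saari, and Zamboni \cite{richomme09} --- so there is nothing internal to compare against; your argument is essentially the standard one from that reference. You correctly identify the crux: the sliding-window observation that $m \mapsto |\omega[m,m+n-1]|_s$ changes by at most $1$ per step, so that $V_s(n)$ is an interval of integers and a bound on $|V_s(n)|$ (which is all that bounded abelian complexity directly provides, via projection of the Parikh vectors) converts into a bound on $\max V_s(n) - \min V_s(n)$. The converse direction via $\rho_{\omega}^{\psi}(n) \leq \prod_{s\in S}|V_s(n)| \leq C^{|S|}$ is also complete.
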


This fact will be our main tool in proving the following result.

\begin{thm} \label{thm: 10}
Let  $\omega \in S^\mathbb{N}$. The following are equivalent.

(i) The word $\omega$ has bounded abelian complexity;

(ii) For all finite $T \subset \mathbb{Z}$, every non-erasing morphism $\phi: S^\infty \mapsto T^\infty$ anchors $\omega$.
\end{thm}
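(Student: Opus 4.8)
The plan is to prove both implications through a single device: the family of \emph{indicator morphisms}. For each $s \in S$, let $\pi_s : S^\infty \mapsto \{0,1\}^\infty$ be the non-erasing, length-preserving morphism generated by $\pi_s(s) = 1$ and $\pi_s(s') = 0$ for $s' \ne s$. Since $\pi_s$ sends every letter to a single letter, each factor $B \in \mathcal{F}^n(\omega)$ corresponds to a factor of $\pi_s(\omega)$ of the same length $n$ whose sum is exactly $|B|_s$. Hence the additive complexity of $\pi_s(\omega)$ at length $n$ equals the number of distinct values of $|B|_s$ among $B \in \mathcal{F}^n(\omega)$. This identity is the bridge between the abelian behaviour of $\omega$ and the additive behaviour of its morphic images, and I would exploit it in both directions.

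For the implication (ii) $\Rightarrow$ (i) I would apply the hypothesis to each $\pi_s$, using that $\{0,1\} \subset \mathbb{Z}$ is a legitimate finite target alphabet. Each $\pi_s(\omega)$ then has bounded additive complexity, so Theorem~\ref{thm: 2} gives, for every $s$, a constant $M_s$ with $\bigl||B|_s - |B'|_s\bigr| < M_s$ for all equal-length factors $B, B'$ of $\omega$. As $S$ is finite, $C = 1 + \max_s M_s$ works for all letters at once, and Lemma~\ref{lemma: 7} then delivers bounded abelian complexity. This direction is short.

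For (i) $\Rightarrow$ (ii) I would first convert bounded abelian complexity into a frequency estimate. By the bridge together with Lemma~\ref{lemma: 7}, each $\pi_s(\omega)$ has bounded additive complexity, so Theorem~\ref{thm: 3} supplies a frequency $\mu_s = \slope{\pi_s(\omega)}$ and a constant $M_s$ with $\bigl||B|_s - \mu_s|B|\bigr| \le M_s$ for every factor $B$ of $\omega$. Fixing a non-erasing $\phi : S^\infty \mapsto T^\infty$ and $N = \max_s |\phi(s)|$, any factor $B$ of $\phi(\omega)$ decomposes as $B = U\phi(W)V$ with $W \in \mathcal{F}(\omega)$ and $|U|,|V| < N$, giving $\sum B = \sum U + \sum V + \sum_s |W|_s \sum\phi(s)$ and $|B| = |U| + |V| + \sum_s |W|_s |\phi(s)|$. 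Substituting $|W|_s = \mu_s|W| + O(1)$ into both expressions yields $\sum B = \lambda |W| + O(1)$ and $|B| = \nu|W| + O(1)$, where $\lambda = \sum_s \mu_s\sum\phi(s)$ and $\nu = \sum_s\mu_s|\phi(s)|$ are constants and all error terms are bounded uniformly in $B$. Since $\sum_s\mu_s = 1$ and each $|\phi(s)| \ge 1$, we have $\nu \ge 1$, so $|W| = |B|/\nu + O(1)$; eliminating $|W|$ gives $\bigl|\sum B - (\lambda/\nu)|B|\bigr| \le K$ for a constant $K$ independent of $B$. Applying this to two equal-length factors of $\phi(\omega)$ and invoking Theorem~\ref{thm: 2} shows that $\phi(\omega)$ has bounded additive complexity, i.e.\ that $\phi$ anchors $\omega$.

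The main obstacle lies in (i) $\Rightarrow$ (ii): a factor $B$ of $\phi(\omega)$ is governed not by itself but by the \emph{block factor} $W \in \mathcal{F}(\omega)$, whose length is not a priori controlled by $|B|$, since distinct letters of $S$ may have $\phi$-images of widely differing lengths. The frequency estimate from Theorem~\ref{thm: 3} is exactly what tames this: replacing each count $|W|_s$ by $\mu_s|W|$ up to bounded error pins $|W|$ to $|B|/\nu$ up to $O(1)$, and thereby converts a linear-in-$|W|$ estimate for $\sum B$ into a linear-in-$|B|$ one. Verifying that every accumulated error term is genuinely bounded independently of $B$ — in particular that dividing by $\nu \ge 1$ preserves boundedness — is the one step that demands care.
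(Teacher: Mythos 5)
Your proof is correct, but it follows a genuinely different route from the paper's in both directions. For (ii) $\Rightarrow$ (i), the paper argues by contraposition: assuming unbounded abelian complexity (and, after a preliminary case split, bounded additive complexity of $\omega$ itself), it pigeonholes to find a single letter $s$ whose counts have unbounded discrepancy and then shows the ``bump'' morphism $s \mapsto s+1$, $t \mapsto t$ fails to anchor $\omega$. Your indicator morphisms $\pi_s$ accomplish the same thing directly and more cleanly --- no case split and no pigeonhole --- since the $1$-uniformity of $\pi_s$ makes the identity $\sum \pi_s(B) = |B|_s$ exact, so Theorem~\ref{thm: 2} plus Lemma~\ref{lemma: 7} finish immediately. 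For (i) $\Rightarrow$ (ii), the paper simply cites the theorem of Cassaigne, Richomme, Saari, and Zamboni that non-erasing morphisms preserve bounded \emph{abelian} complexity and then passes to additive complexity; you instead give a self-contained argument via letter frequencies $\mu_s = \slope{\pi_s(\omega)}$, the block decomposition $B = U\phi(W)V$, and elimination of $|W|$ to get a uniform bound $\left|\sum B - (\lambda/\nu)|B|\right| \le K$. This buys independence from the external reference at the cost of some bookkeeping, all of which checks out: $\sum_s \mu_s = 1$ follows from $\sum_s |\omega[1,n]|_s = n$, and $\nu \ge 1$ does make the division harmless. The one detail you gloss over is the degenerate case where $B$ sits inside one or two blocks (so $W$ is empty and the decomposition with $|U|,|V| < N$ is not literally available); this is harmless because such $B$ have $|B| < 2N$ and there are only finitely many factors of each bounded length, so $K$ can be enlarged to absorb them --- but it is worth a sentence.
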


\begin{proof}
First, assume $(i)$ holds. Then, for a finite $T \subset \mathbb{Z}$ and a non-erasing morphism $\phi: S^\infty \mapsto T^\infty$, a theorem of Cassaigne, Richomme, Saari, and Zamboni \cite{cassaigne12} asserts that $\phi(\omega)$ has bounded abelian complexity
and consequently $\phi(\omega)$ has bounded additive complexity.

Next, assume that $(i)$ does not hold. We will exhibit a finite $T \subset \mathbb{Z}$ and a non-erasing morphism $\phi: S^\infty \mapsto T^\infty$ such that $\phi(\omega)$ has unbounded additive complexity. We may assume that $\omega$ has bounded additive complexity, or else we can take $T = S$ and $\phi$ equal to the identity. By Theorem~\ref{thm: 2} there is $M \in\mathbb{N}$ such that $|\sum B - \sum B'| < M$ whenever $B,B' \in \mathcal{F}(\omega)$ are equally long. For all $n \in \mathbb{N}$ we define
\[ C(n) = \max \{ ||B|_s - |B'|_s| : B,B' \in \mathcal{F}^n(\omega), s \in S\}. \]
By Lemma~\ref{lemma: 7}, the sequence $\{ C(n) \}_{n\in \mathbb{N}}$ is unbounded, and so we may choose a subsequence $\{ C(n_k) \}_{k\in \mathbb{N}}$ such that $C(n_k) \to \infty$ as $k\to\infty$. By the pigeonhole principle there is $s \in S$ and a strictly increasing sequence $\{k_j\}_{j\in \mathbb{N}}$ such that, for all $j \in \mathbb{N}$, there are $B'_j,B^{\prime \prime}_j \in \mathcal{F}^{n_{k_j}}(\omega)$ with 
\[ |B'_j|_s - |B^{\prime \prime}_j|_s = C\left(n_{k_j} \right). \]
If $s+1\in S$ we take $T=S\backslash\{s\}$, otherwise $T = (S \backslash \{s\}) \cup \{s + 1\}$. Let $\phi: S^\infty \mapsto T^\infty$ be generated by the assignments $t \mapsto t$ for all $t \in S \backslash \{s\}$ and $s \mapsto s+1$. Then
\[ \sum \phi(B'_j) - \sum \phi(B^{\prime \prime}_j) = C\left(n_{k_j} \right) + \left( \sum B'_j - \sum B^{\prime \prime}_j) \right) \geq C\left( n_{k_j} \right) - M \to \infty \]
as $j \to \infty$. By Theorem~\ref{thm: 2}, $\phi(\omega)$ does not have bounded additive complexity. 
\end{proof}

\subsection{Anchored Words and Splicing} \label{sub: 3.3}

A word $\omega \in S^\mathbb{N}$ is called \emph{anchored} if there exists a finite alphabet $T$, a non-erasing morphism $\phi: T^\infty \mapsto S^\infty$, and a word $\omega' \in T^\mathbb{N}$ such that $\phi(\omega') = \omega$. If there exists $n$ such that $\omega \setminus [1,n]$ is anchored, then $\omega$ is \emph{eventually anchored}. Notice that eventually anchored words have rational slope. 

It follows from Theorem~\ref{thm: 7} that every uniformly recurrent word with bounded additive complexity and rational slope is eventually anchored. On the other hand, we also showed that not every recurrent word with bounded additive complexity and rational slope is eventually anchored. Thus, the question arises as to how one may construct all words with bounded additive complexity and rational slope. 

Our purpose here is to make a conjecture concerning the above problem, taking into account the work done in Subsection~\ref{sub: 2.4}. To state the conjecture, we need a definition.

\begin{defn} 
Let $n \in \mathbb{N}$ be given and suppose $\omega_1, \dots, \omega_n$ be infinite words, each over a finite integer alphabet. A \emph{splice} of $\{\omega_1, \dots, \omega_n\}$ is any word $\omega$ of the form 
\[ \omega = B_{1,1} B_{2,1} \cdots B_{n,1}B_{1,2}\cdots B_{n,2} \cdots, \]
where $B_{i,1}B_{i,2} \dots B_{i,j}$ is a prefix of $\omega_i$, for all $j \in \mathbb{N}$ and all $i=1,2,\dots,n$. In particular, we leave open the possibility that $B_{i,j}$ is empty for some $i,j \in \mathbb{N}$, so long as infinitely many $B_{i,j}$ are nonempty.
\end{defn}

We observe the following closure property of words with bounded additive complexity.

\begin{lemma} \label{lemma: 8}
If $\omega_1,\dots, \omega_n$ are words with bounded additive complexity, all of which have slope $\alpha$, then any splice $\omega$ of $\{\omega_1,\dots,\omega_n\}$ has bounded additive complexity and slope $\alpha$.
\end{lemma}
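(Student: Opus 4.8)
The plan is to show directly that $\omega$ satisfies the criterion of Theorem~\ref{thm: 2}, by establishing a single uniform bound of the form $\left| \sum B - \alpha |B| \right| \le C$ valid for \emph{every} factor $B$ of $\omega$, where $C$ does not depend on $B$. Once this is in hand, two equally long factors $B,B'$ satisfy $\left| \sum B - \sum B' \right| \le 2C$, so Theorem~\ref{thm: 2} yields bounded additive complexity; and applying the bound to the prefixes $\omega[1,N]$ gives $\left| \slope{\omega[1,N]} - \alpha \right| \le C/N \to 0$, so $\slope{\omega} = \alpha$. To produce the constant $C$, I would first apply Theorem~\ref{thm: 3} to each $\omega_i$ (each has bounded additive complexity and slope $\alpha$) to obtain $M_i \in \mathbb{N}$ with $\left| \sum D - \alpha|D| \right| \le M_i$ for every nonempty factor $D$ of $\omega_i$, and set $M_0 = \max_i M_i$.

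The heart of the argument is a structural decomposition of an arbitrary factor $B$ of $\omega$. Since $\omega$ is the concatenation of the blocks $B_{i,j}$ in the fixed interleaved order $B_{1,1},\dots,B_{n,1},B_{1,2},\dots$, the factor $B$ consists of a (possibly empty) suffix of one block, a run of complete blocks, and a (possibly empty) prefix of a later block. For each $i$, let $D_i$ be the concatenation, in increasing order of $j$, of all the $\omega_i$-pieces meeting $B$ --- that is, the complete blocks $B_{i,j}$ contained in $B$ together with any partial boundary block belonging to $\omega_i$. I claim each $D_i$ is a factor of $\omega_i$. Indeed, because $B$ is an interval of $\omega$, the indices $j$ for which $B_{i,j}$ meets $B$ form a set of consecutive integers, and since $B_{i,1}B_{i,2}\cdots$ is a prefix of $\omega_i$, these pieces are contiguous in $\omega_i$; a suffix of the first piece followed by subsequent full pieces followed by a prefix of the last piece is therefore a factor of $\omega_i$.

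With the decomposition established, the estimate is routine. Partitioning the letters of $B$ according to the word $\omega_i$ from which each block originates gives $\sum B = \sum_{i=1}^n \sum D_i$ and $|B| = \sum_{i=1}^n |D_i|$, these quantities being additive over the partition and independent of the interleaving order. Hence
\[ \left| \sum B - \alpha|B| \right| = \left| \sum_{i=1}^n \left( \sum D_i - \alpha |D_i| \right) \right| \le \sum_{i=1}^n \left| \sum D_i - \alpha|D_i| \right| \le n M_0, \]
where the last inequality uses the bound from Theorem~\ref{thm: 3} on each nonempty $D_i$ and the fact that an empty $D_i$ contributes $0$. Taking $C = nM_0$ completes the proof as outlined above.

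I expect the only real obstacle to be the bookkeeping in the structural claim: one must check carefully that the partial blocks at the two ends of $B$ glue onto the interior full blocks to form genuine factors of the relevant $\omega_i$, handle the degenerate case in which $B$ lies inside a single block, and verify that the empty blocks permitted by the definition of a splice do not disrupt the consecutiveness of the indices $j$. The key point making all of this work is that the interleaving pattern is globally fixed, so ``belonging to $\omega_i$'' is a well-defined property of each block of $\omega$, and the $\omega_i$-pieces inside any interval of $\omega$ are automatically consecutive in $\omega_i$.
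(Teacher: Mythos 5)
Your proposal is correct and follows essentially the same route as the paper: the paper likewise rearranges an arbitrary factor $B$ of the splice into a concatenation $A_1A_2\cdots A_n$ with $A_i\in\mathcal{F}(\omega_i)$ (your $D_i$), applies Theorem~\ref{thm: 3} to each piece to get $\left|\sum B-\alpha|B|\right|\leq\sum_i M_i$, and concludes by Theorem~\ref{thm: 3} again. Your write-up merely makes explicit the bookkeeping (consecutiveness of the $\omega_i$-pieces) that the paper compresses into the phrase ``we can permute,'' and routes the final step through Theorem~\ref{thm: 2} plus a direct prefix-limit computation of the slope rather than the converse of Theorem~\ref{thm: 3}.
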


\begin{proof}
We can permute an arbitrary factor $B \in \mathcal{F}(\omega)$ to obtain a word $B'$ of the form $B' = A_1A_2\dots A_n$,
where $A_i \in \mathcal{F}(\omega_i)$ for $1 \leq i \leq n$. By Theorem~\ref{thm: 3}, there are positive integers $M_i$, depending only on $\omega_i$, such that 
\[ \left| \sum A_i - \alpha|A_i| \right| \leq M_i \]
for all $i = 1,2,\dots,n$. Therefore,
\[ \left| \sum B - \alpha|B| \right| = \left| \sum B' - \alpha |B'| \right| \leq \sum_{1 \leq i \leq n} \left| \sum A_i - \alpha|A_i| \right| \leq \sum_{1 \leq i \leq n} M_i, \]
and in view of Theorem~\ref{thm: 3}, dividing by $|B|$ yields the result. 
\end{proof}

We observe that the word constructed in Subsection 2.4 is a splice of a set containing three anchored words, each of which has slope $1$; these words are $\omega_1 = 111 \cdots$, $\omega_2 = 020202\cdots$ and $\omega_3 = 202020 \cdots$. Indeed, we conjecture that every word with bounded additive complexity and rational slope has this form.  

\begin{q} Prove or disprove the following conjecture: every word with bounded additive complexity and rational slope is a splice of a finite set of eventually anchored words, each of which has the same slope.
\end{q}

\section{Words With Constant Additive Complexity} \label{sec: 4}

Another noteworthy inquiry pertaining to additive complexity is that of what sequences $\{s_n\}_{n=1}^\infty$ over  $\mathbb{N}$ satisfy $\rho_{\omega}^{\Sigma}(n) = s_n$ for all $n \in \mathbb{N}$ and some infinite word $\omega$ over a finite integer alphabet. In particular, we are interested in finding words, if there are any, which have constant additive complexity. The question of existence, without further qualification, may be answered by considering the words $\omega_n \in \{1,2,\dots,n\}^\mathbb{N}$ given by $\omega_n(i) = i$, $i=1,2,\dots,n-1$ and $\omega_n(i) = n$, $i \geq n$. Clearly, $\omega_n$ has $\rho_{\omega_n}^{\Sigma}(i) = n$ for all $i \in \mathbb{N}$. We also note that $\rho_{\omega_n}^{\psi}(i) = n$ for all $i \in \mathbb{N}$. This was observed by Currie and Rampersad \cite{currie10}.

The question of whether or not there exist recurrent words with constant additive complexity equal to any given natural number cannot be answered so easily. Indeed, for the abelian complexity, Currie and Rampersad \cite{currie10} have shown that there are no recurrent words $\omega$ having $\rho_{\omega}^{\psi}(n) = k$ for all $n \in \mathbb{N}$, if $k \geq 4$. Our next result shows the existence of, for any given $k \in \mathbb{N}$, recurrent words $\omega$ with $\rho_{\omega}^{\Sigma}(n) = 2k + 1$ for all $n \in \mathbb{N}$.

\begin{thm} \label{thm: 11}
Let $k \in \mathbb{N}$. There exists a recurrent word $\omega \in \{0,1,2,\dots,2k\}^\mathbb{N}$ having $\rho_{\omega}^{\Sigma}(n) = 2k + 1$ for all $n \in \mathbb{N}$. 
\end{thm}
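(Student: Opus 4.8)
The plan is to realize the required word through its sequence of partial sums. Write each letter as $\omega(j) = k + d_j$ with $d_j \in \{-k,\dots,k\}$, and set $D_0 = 0$ and $D_j = d_1 + \cdots + d_j$, so that $\sum \omega[m+1, m+n] = nk + (D_{m+n} - D_m)$. Then $\rho_\omega^\Sigma(n)$ equals the number of distinct values of $D_{m+n} - D_m$ as $m$ ranges over $\mathbb{N} \cup \{0\}$. The strategy is to build the walk $D = (D_j)$ so that it is confined to the $k+1$ levels $\{0,1,\dots,k\}$ while being rich enough that, for every gap $n$, every difference in $\{-k,\dots,k\}$ is attained.

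Confinement gives the upper bound for free: if $D_j \in \{0,\dots,k\}$ for all $j$, then $D_{m+n} - D_m \in \{-k,\dots,k\}$, so $\rho_\omega^\Sigma(n) \le 2k+1$ for every $n$ (this is the boundedness of Theorem~\ref{thm: 2}, made exact by the size of the window). For the matching lower bound I would introduce, for each $L \in \mathbb{N}$, the \emph{plateau block} $R_L = 0^L 1^L 0^L 2^L 0^L \cdots 0^L k^L 0^L$ over the level-alphabet $\{0,\dots,k\}$, noting that each sub-block $c^L$ of $R_L$ is flanked by $0^L$ on both sides. If $D$ contains $R_L$ as a factor, then reading a sub-factor $0^L c^L$ (resp.\ $c^L 0^L$) and anchoring $m$ at the last $0$ (resp.\ last $c$) realizes the difference $D_{m+n} - D_m = c$ (resp.\ $-c$) for every $n \le L$, while a single run $0^L$ realizes the difference $0$; hence containing $R_L$ forces all of $\{-k,\dots,k\}$ to occur at every gap $n \le L$.

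It remains to produce a \emph{recurrent} word $D$ over $\{0,\dots,k\}$ that contains $R_L$ as a factor for every $L$ and whose steps stay in $\{-k,\dots,k\}$. Since each $R_L$ begins and ends with the letter $0$, arbitrary concatenations of the $R_L$ keep $D$ inside the level window and keep every step in range. I would obtain recurrence by the self-similar construction $E_1 = R_1$, $E_{L+1} = E_L\, R_{L+1}\, E_L$; the $E_L$ are nested prefixes, so they converge to an infinite word $D = E_\infty$, which contains every $R_L$. Because $E_L$ occurs at least $2^{L'-L}$ times in $E_{L'}$, each $E_L$, and hence each of its factors, occurs infinitely often in $D$, so $D$ is recurrent. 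Finally, a factor of $\omega$ of length $\ell$ is determined by the corresponding factor $D_m \cdots D_{m+\ell}$ of the walk, so recurrence of $D$ transfers to recurrence of $\omega = \omega(1)\omega(2)\cdots$, which lies in $\{0,\dots,2k\}^{\mathbb{N}}$ and satisfies $\rho_\omega^\Sigma(n) = 2k+1$ for all $n$ by the two bounds above.

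I expect the main obstacle to be pinning the count to exactly $2k+1$ simultaneously for all $n$: the upper bound is immediate from confinement, but the lower bound must hold at every scale, which is precisely why one needs plateau blocks $R_L$ of all lengths and must verify that the recursive construction inserts them without enlarging the level window or destroying recurrence. The delicate point is thus not any single estimate but the bookkeeping that keeps confinement, the all-gaps lower bound, and recurrence mutually compatible; the choice $E_{L+1} = E_L\, R_{L+1}\, E_L$ with $0$-bordered blocks is exactly what makes these three requirements coexist.
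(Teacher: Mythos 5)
Your proof is correct, but it takes a genuinely different route from the paper's. The paper constructs the word as a morphic image: it takes the recurrent word $\omega_<'$ over $\{0,1,\dots,k\}$ obtained by concatenating all finite words over that alphabet in increasing order, applies the anchor generated by $\phi(i)=i(2k-i)$ (so every image block has length $2$ and sum $2k$), and then computes the sum sets of odd- and even-length factors by a short case analysis on how a factor sits relative to the block boundaries; both recurrence and the realization of all $2k+1$ sums come essentially for free from the universality of $\omega_<'$. You instead build the word through its partial-sum profile: confining the shifted walk $D_j$ to the window $\{0,\dots,k\}$ gives the upper bound $\rho_{\omega}^{\Sigma}(n)\le 2k+1$ immediately, the plateau gadgets $R_L$ planted at every scale give the matching lower bound, and the doubling scheme $E_{L+1}=E_L R_{L+1} E_L$ gives recurrence. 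Your argument is longer but more transparent about why the answer is exactly $2k+1$ (it is the number of differences available in a window of $k+1$ levels), and it is self-contained, whereas the paper's version is shorter and ties the example to the anchor machinery of Section~\ref{sec: 3} by exhibiting it as a morphic image of a very simple word. One cosmetic fix: inside $R_L$ every run has length exactly $L$, so two positions at distance exactly $L$ always lie in adjacent runs at different levels, and the difference $0$ is not realized at gap $L$ within $R_L$ alone; this costs nothing, since for a given gap $n$ you may simply invoke $R_L$ with $L\ge n+1$, all of which occur in $D$.
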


\begin{proof}
Let $k \in \mathbb{N}$ be given. For all $i \in \mathbb{N}$, let $<_i$ be a total ordering on the set of words over $\{0,1,2,\dots,k\}$ having length $i$. Thereby, define the total ordering $<$ on $\{0,1,2,\dots,k\}^*$ by setting $B' < B$ if and only if
\[ |B'| < |B| \qquad \textup{or} \qquad |B| = |B'| = i \textup{ and } B' <_i B, \qquad \forall B,B' \in \{0,1,2\dots,k\}^*. \]
We then define $\omega_<' \in \{0,1,2\dots,k\}^\mathbb{N}$ as the word obtained by concatenating all elements of $\{0,1,2\dots,k\}^*$ in increasing order under $<$. Note that $\omega_<'$ is recurrent.

Now, we let $\phi: \{0,1,2\dots,k\}^\infty \mapsto \{0,1,2,\dots,2k\}^\infty$ be the anchor generated by $\phi(i) = i(2k - i)$, $0 \leq i \leq k$. Applying it, we obtain the word $\omega_< = \phi(\omega_<')$; we note that $\omega_<$ is recurrent. By examining separately the cases $n$ odd and $n$ even, we will show that $\rho_{\omega_<}^\Sigma(n) = 2k + 1$ for all $n \in \mathbb{N}$.

First, let $n = 2j + 1$, for some $j \in \mathbb{N}$. Then for $B \in \mathcal{F}^n(\omega_<)$, there exists $B' \in \mathcal{F}^j(\omega_<')$ such that $B = s\phi(B')$ or $B = \phi(B')s$ for some $s \in \{0,1,2,\dots,2k\}$. By definition of $\omega_<'$ and $\phi$, we have
\[ \left\{ \sum B: B \in \mathcal{F}^n(\omega_<) \right\} = \{2kj + s: s = 0,1,2,\dots,2k\}. \]
Thus, we have $\rho_{\omega_<}^\Sigma(n) = 2k + 1$ in the odd case.
 
Next, let $n = 2j$, for some $j \in \mathbb{N}$. Then for $B \in \mathcal{F}^n(\omega_<)$, there exists $B' \in \mathcal{F}^j(\omega_<')$ such that $B = \phi(B')$, or there exists $B'' \in \mathcal{F}^{j-1}(\omega_<')$ such that $B = s\phi(B'')t$ for some $s,t \in \{0,1,2,\dots,2k\}$. In the first case, $\sum B = 2kj$; in the second case, the set of possible sums for $B$ is $\{2kj + s: s = -k,\dots,-1,0,1,\dots, k\}$.
Finally, $\rho_{\omega_<}^\Sigma(n) = 2k + 1$ in the even case as well. This completes the proof.
\end{proof} 

\section{Lattice Complexities} \label{sec: 5}

For the remainder of this note, $\mu$ will denote a morphism of $S^*$ into the additive monoid $\mathbb{Z}^t$, $t \geq 1$, i.e., for all $B,B' \in S^*$, we have $\mu(BB') = \mu(B) + \mu(B')$.

\begin{defn} If $\omega \in S^\mathbb{N}$, the \emph{$\mu$-complexity} of $\omega$ is the map $\rho_{\omega}^{\mu}: \mathbb{N} \mapsto \mathbb{N}$ given by 
\[ \rho_{\omega}^{\mu}(n) = | \{ \mu(B): B \in \mathcal{F}^n(\omega)\} |. \]
Any such map is known as a \emph{lattice complexity} for $\omega$. 
\end{defn}

Thus, the additive complexity $\rho_{\omega}^{\Sigma}$ of a word $\omega \in S^\mathbb{N}$ is the $\sum$-complexity, where $\sum: S^* \mapsto \mathbb{Z}$ is generated by $\rho_{\omega}^{\Sigma}(s) = s$, $s \in S$. 
As a second example of a lattice complexity, consider an ordered alphabet $S = \{s_1,\dots,s_k\}$. Letting $\psi: S^* \mapsto \mathbb{Z}^{k}$ be the \emph{Parikh map} \cite{parikh}, i.e., the morphism generated by
\begin{align*}  \psi(s_1) &= (1,0,0,\cdots,0)  \\
\psi(s_2) &= (0,1,0,\cdots, 0) \\
\vdots \quad &\: \:\vdots \qquad \quad \vdots \\
\psi(s_k) &= (0,0,0, \cdots, 1),
\end{align*}
we see that $\rho_{\omega}^{\psi}$ is precisely the abelian complexity of $\omega$. 

Many of the results we proved for words with bounded additive complexity have analogues in this more general setting. For example, one may easily prove the following result by passing to components and applying Theorem~\ref{thm: 2}.

\begin{thm} \label{thm: 12}
A word $\omega \in S^\mathbb{N}$ has bounded $\mu$-complexity if and only if there exists $M \in \mathbb{N}$ such that for all $n \in \mathbb{N}$ and all $B,B' \in \mathcal{F}^n(\omega)$, we have $|| \mu(B) - \mu(B') || \leq M$,
where $|| \cdot ||$ is the Euclidean norm on $\mathbb{Z}^t$.
\end{thm}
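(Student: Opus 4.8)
The plan is to reduce the vector-valued statement to the scalar case of Theorem~\ref{thm: 2} by working one coordinate at a time, exactly as the surrounding text suggests. Write $\mu = (\mu_1,\dots,\mu_t)$, where each $\mu_i \colon S^* \mapsto \mathbb{Z}$ is the composition of $\mu$ with projection onto the $i$-th coordinate; each $\mu_i$ is again a morphism into $(\mathbb{Z},+)$. For each $i$ I would introduce the length-preserving coding $\tau_i \colon S^* \mapsto (\mu_i(S))^*$ generated by $s \mapsto \mu_i(s)$, and set $\omega^{(i)} = \tau_i(\omega)$, an infinite word over the finite integer alphabet $\mu_i(S)$. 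Since $\tau_i$ acts letter-by-letter, $\mathcal{F}^n(\omega^{(i)}) = \{\tau_i(B) : B \in \mathcal{F}^n(\omega)\}$, and because $\mu_i$ is a morphism, $\sum \tau_i(B) = \mu_i(B)$ for every factor $B$. This bookkeeping is what links the ordinary sum on $\omega^{(i)}$ with the $i$-th coordinate of $\mu$ on $\omega$.

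For the forward direction I would assume $\rho_\omega^\mu(n) \leq M_0$ for all $n$. The set $\{\mu_i(B) : B \in \mathcal{F}^n(\omega)\}$ is the image of $\{\mu(B) : B \in \mathcal{F}^n(\omega)\}$ under the $i$-th coordinate projection, so its cardinality is at most $M_0$; that is, $\rho_{\omega^{(i)}}^\Sigma(n) \leq M_0$ for all $n$. Hence each $\omega^{(i)}$ has bounded additive complexity, and Theorem~\ref{thm: 2} supplies $M_i \in \mathbb{N}$ with $|\mu_i(B) - \mu_i(B')| < M_i$ for all equal-length factors $B,B'$. Combining coordinates through the Euclidean norm gives $\|\mu(B) - \mu(B')\| < \left( \sum_{i=1}^t M_i^2 \right)^{1/2}$, and rounding up to an integer $M$ yields the desired bound.

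For the converse I would assume $\|\mu(B) - \mu(B')\| \leq M$ for all equal-length factors. Each coordinate is dominated by the Euclidean norm, so $|\mu_i(B) - \mu_i(B')| \leq M < M+1$, and the ``if'' direction of Theorem~\ref{thm: 2} shows every $\omega^{(i)}$ has bounded additive complexity, say $\rho_{\omega^{(i)}}^\Sigma(n) \leq M_i'$. Since a vector $\mu(B)$ is determined by its coordinates, the number of distinct values of $\mu(B)$ over $B \in \mathcal{F}^n(\omega)$ is at most $\prod_{i=1}^t \rho_{\omega^{(i)}}^\Sigma(n) \leq \prod_{i=1}^t M_i'$, independently of $n$, so $\omega$ has bounded $\mu$-complexity. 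Alternatively, the converse follows directly: the points $\mu(B)$ all lie within Euclidean distance $M$ of any fixed $\mu(B_0)$, hence occupy a bounded region of the lattice $\mathbb{Z}^t$ and number at most $(2\lfloor M \rfloor + 1)^t$.

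I do not expect a genuinely hard step here; the argument is routine once the correspondence between $\omega^{(i)}$ and the $i$-th coordinate of $\mu$ is in place. The only point requiring a little care is the cardinality bookkeeping between the vector statement and the coordinatewise statements — in the forward direction, that coordinate projection can only decrease the number of distinct images, and in the converse, that a lattice vector is pinned down by its coordinates so the count is controlled by the product of the coordinate counts.
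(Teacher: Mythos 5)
Your proof is correct and follows the same route the paper indicates: the paper gives no detailed proof, saying only that the result follows ``by passing to components and applying Theorem~\ref{thm: 2},'' which is precisely your coordinatewise reduction via the codings $\omega^{(i)}$. The bookkeeping in both directions (projection can only shrink the set of images; a lattice vector within Euclidean distance $M$ of a fixed point is confined to at most $(2M+1)^t$ points) is sound.
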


We observe that Lemma~\ref{lemma: 7} is a corollary of Theorem~\ref{thm: 12}. For another example of how results on additive complexity readily generalize to results on lattice complexities, define a \emph{$k$-power modulo $\mu$} as a word $B = B_1B_2\cdots B_k \in S^*$ such that $\mu(B_1) = \mu(B_2) = \cdots = \mu(B_k)$ and $|B_1| = |B_2| = \cdots = |B_k|$. Then in view of Theorem~\ref{thm: 12}, the following is essentially due to Ardal et al.~\cite{ardal12}.

\begin{thm} \label{thm: 13}
If $\omega \in S^\mathbb{N}$ has bounded $\mu$-complexity, then $\omega$ contains a $k$-power modulo $\mu$ for every $k \in \mathbb{N}$. 
\end{thm}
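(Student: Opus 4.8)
The plan is to mirror the van der Waerden argument behind Theorem~\ref{thm: 1}, replacing sums by $\mu$-values and Theorem~\ref{thm: 2} by Theorem~\ref{thm: 12}. First I would record the bounded-difference data: by Theorem~\ref{thm: 12} there is $M \in \mathbb{N}$ such that $\| \mu(B) - \mu(B') \| \leq M$ whenever $B,B'$ are factors of $\omega$ of equal length. Writing $P(n) = \mu(\omega[1,n]) \in \mathbb{Z}^t$ for the prefix values (with $P(0) = \mathbf{0}$), I observe that a factor $B_1B_2\cdots B_k$ with $|B_i| = d$ and $\mu(B_1) = \cdots = \mu(B_k)$ corresponds exactly to indices $a, a+d, \dots, a+kd$ for which the telescoped differences $P(a+jd) - P(a+(j-1)d)$ agree for $j = 1,\dots,k$. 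Since each such difference is the $\mu$-value of a length-$d$ factor, the bounded-difference property forces, for each fixed $d$, all these block values to lie within Euclidean distance $M$ of one another, hence each of their $t$ coordinates to lie in a common interval of length at most $M$.

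The main device is van der Waerden's theorem \cite{vdW} applied to a residue coloring. Fix $k$ and choose an integer $q > M$. I would color $\mathbb{N}$ by $c(n) = (P(n)_1 \bmod q, \dots, P(n)_t \bmod q) \in (\mathbb{Z}/q\mathbb{Z})^t$, a coloring using only finitely many ($q^t$) colors. Van der Waerden's theorem then produces a monochromatic arithmetic progression $a, a+d, \dots, a+kd$ of length $k+1$. Monochromaticity gives $P(a+jd) \equiv P(a) \pmod q$ in every coordinate, so each block value $\sigma_j = P(a+jd) - P(a+(j-1)d)$ is $\equiv \mathbf{0} \pmod q$ coordinate-wise. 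But the coordinates of the various $\sigma_j$ all lie in a common interval of length at most $M < q$, and such an interval contains at most one integer divisible by $q$; hence the $\sigma_j$ coincide as vectors. Setting $B_j = \omega[a+(j-1)d+1, a+jd]$ then exhibits $\omega[a+1, a+kd] = B_1 B_2 \cdots B_k$ as a $k$-power modulo $\mu$.

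The step I expect to require the most care is the coupling that makes the common difference of the van der Waerden progression serve simultaneously as the block length: this is what lets a single monochromatic progression of indices produce $k$ \emph{consecutive} equal-length blocks, and it rests on the telescoping identity $\sigma_j = P(a+jd) - P(a+(j-1)d)$ together with the a priori choice $q > M$ made before the coloring is introduced. I would also stress that, unlike the rational-slope results of Subsection~\ref{sub: 2.3}, this argument never refers to a slope, and so applies whether or not $\omega$ possesses an average in $\mathbb{Q}$; the residue coloring plays the role of the coloring $\chi_\omega$ while keeping the palette finite regardless. Specializing to $t = 1$ and $\mu = \sum$ recovers the argument of Ardal et al.~\cite{ardal12} for Theorem~\ref{thm: 1}.
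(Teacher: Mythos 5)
Your proof is correct, and it is essentially the argument the paper has in mind: the paper offers no proof of Theorem~\ref{thm: 13} beyond the remark that it follows from Theorem~\ref{thm: 12} together with the method of Ardal et al.~\cite{ardal12}, and that method is exactly your van der Waerden residue-coloring of the prefix values $P(n) \bmod q$ with $q > M$, which you have extended coordinate-wise to $\mathbb{Z}^t$. You have simply written out in full the details the paper leaves to the citation.
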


For $B \in S^*$, define its \emph{slope modulo $\mu$} as $\modslope{\mu}{B} = |B|^{-1}\mu(B)$, and define the slope of $\omega \in S^\mathbb{N}$ modulo $\mu$ in the natural way. By substituting $\modslope{\mu}{}$ for $\slope{}$ and $\mu(B)$ for $\sum B$, and by replacing the assumption $\slope{\omega} \in \mathbb{Q}$ by $\modslope{\mu}{\omega} \in \mathbb{Q}^t$,
one may also prove results (we omit the details) analogous to Theorems~\ref{thm: 3},~\ref{thm: 4},~\ref{thm: 5},~\ref{thm: 6},~\ref{thm: 7},~\ref{thm: 8}, and~\ref{thm: 9}, as well as to Lemmas~\ref{lemma: 1},~\ref{lemma: 2},~\ref{lemma: 3},~\ref{lemma: 4},~\ref{lemma: 5},~\ref{lemma: 6}, and~\ref{lemma: 8}. In particular, we note that these results hold for the abelian complexity; for this special case, Theorem~\ref{thm: 9} is due to Cassaigne et al.~\cite{cassaigne12} and Theorem~\ref{thm: 3} is due to Adamczewski~\cite{adam}.

It would be interesting to study the converse question as to whether or not lattice complexities can be used to gain further insight into additive complexity.

\section{Acknowledgments}

The author is pleased to thank Veso Jungi\'{c} and Tom Brown for helpful conversations and for editorial suggestions, which have significantly improved the quality of this paper.

%
%

\end{document}